\newtheorem{thm}{Theorem}
\newtheorem{prop}[thm]{Proposition}
\newtheorem{lem}[thm]{Lemma}
\newtheorem{exm}{Example}
\newtheorem{cor}[thm]{Corollary}
\newtheorem*{thmm}{Theorem 2}
\begin{document}

%+Commands
\newcommand{\mfI}{\mathfrak{I}} \newcommand{\mmu}{m_{\mu}} \newcommand{\mnu}{m_{\nu}}
\newcommand{\mfs}{\mathfrak{s}} \newcommand{\ulam}{u^+_{\lambda}} \newcommand{\umu}{u^+_{\mu}}
\newcommand{\mft}{\mathfrak{t}} \newcommand{\domlam} {\sum_{i=1}^{s-1} |\lambda^{(i)}|+\sum_{j=1}^k\lambda^{(s)}_j}
\newcommand{\mttA}{\mathtt{A}}  \newcommand{\dommu}{\sum_{i=1}^{s-1}|\mu^{(i)}|+\sum_{j=1}^k\mu^{(s)}_{j}}
\newcommand{\mttS}{\mathtt{S}}  \newcommand{\ddt}{\mathfrak{d}^{(s)}_{d,t}}
\newcommand{\mttT}{\mathtt{T}}  \newcommand{\ls}{\mathfrak{l}^{(s')}}
\newcommand{\bigH}{\check{\mathscr{H}}}
\newcommand{\AKA}{\mathscr{H}}
\newcommand{\IWA}{\mathscr{H}^{\mathrm{fin}}_n}
\newcommand{\mlam}{m_{\lambda}}

\newcommand{\ten}{10}
\newcommand{\eleven}{11}
\newcommand{\twelve}{12}
\newcommand{\thirteen}{13}
\newcommand{\fourteen}{14}
\newcommand{\fifteen}{15}
\newcommand{\sixteen}{16}
\newcommand{\seventeen}{17}
\newcommand{\eighteen}{18}
\newcommand{\nineteen}{19}
\newcommand{\twenty}{20}

\newcommand{\oneone}{1_1}
\newcommand{\twoone}{2_1}
\newcommand{\threeone}{3_1}
\newcommand{\fourone}{4_1}
\newcommand{\fiveone}{5_1}

\newcommand{\onetwo}{1_2}
\newcommand{\twotwo}{2_2}
\newcommand{\threetwo}{3_2}
\newcommand{\fourtwo}{4_2}
\newcommand{\fivetwo}{5_2}

\newcommand{\onethree}{1_3}
\newcommand{\twothree}{2_3}
\newcommand{\threethree}{3_3}
\newcommand{\fourthree}{4_3}
\newcommand{\fivethree}{5_3}

%+Title
\title{Homomorphisms between Specht Modules of the Ariki-Koike algebra }
\author{Kelvin Corlett
\thanks{The author wishes to thank {\it Rheynn Ynsee, Reiltys Ellan Vannin} for their support throughout his PhD.}\\
School of Mathematics, University of East Anglia,\\
Norwich, United Kingdom, NR4 7TJ,
\\ \texttt{k.corlett@uea.ac.uk}}

\date{\today}
\maketitle
%-Title

%+Abstract
\begin{abstract}
    In this paper we generalize a theorem due to Lyle, extending its application to the setting of the Ariki-Koike algebra, and in doing so establish an analogue of the kernel intersection theorem.  This in turn provides us with a means towards constructing homomorphisms between Specht modules for the Ariki-Koike algebra. 
\end{abstract}
%-Abstract

%+Contents
\tableofcontents
%-Contents

\section{Introduction}

Given positive integers $r$ and $n$, an Ariki-Koike algebra can be defined as a deformation of the group algebra of the  complex reflection group $C_r\wr\mathfrak{S}_n$.  Also known as the (cyclotomic) Hecke algebra of type $G(r,1,n)$, this family of algebras contains the Iwahori-Hecke algebras of type $A$ and type $B$, which correspond to when $r=1$ and $r=2$ respectively. 

In this paper we generalize a result due to Lyle  \cite[Theorem 2.3]{Lyle2} from the representation theory of the Iwahori-Hecke algebra of type $A$, which itself performs a similar role as does  the Kernel Intersection in \cite{DJ1}. This then  provides us with a criterion which when satisfied allows for the explicit combinatorial construction of homomorphisms between the  Specht modules of the Ariki-Koike algebra.

In practice, the construction of these homomorphisms between arbitrary Specht modules is difficult.  However, one case in which this is relatively straightforward is when the diagrams of the multipartitions indexing the given Specht modules differ only in so much as one can be formed from the other by deleting a removable node in one component and inserting an addable node in the preceding component.  When $r=2$ this amounts to a generalization of the `one node homomorphisms' of \cite{Lyle1} and \cite{Lyle2} to the setting of the Iwahori-Hecke Algebra of type $B$. We  cover this case fully in \cite{Cor1}.

Unfortunately,  in this setting we at present lack  a semistandard basis theorem analogous to that found in \cite{DJ1}.  As such we cannot say precisely when we can construct the entire homomorphism space between two given Specht modules by the scheme set out in this paper.  

As a final remark, most of the definitions and notation which form the background to this  paper follows that used by Mathas in his survey regarding the representation theory of the Ariki-Koike and cyclotomic $q$-Schur algebras \cite{Mathas2}.

\subsection{Background}

\subsection{The Ariki-Koike Algebra}

Let $\mathbb{F}$ be a field and let $q\neq1$  be some non-zero element of $\mathbb{F}$. For each pair of positive integers $n$ and $r$ the Ariki-Koike algebra, which we denote by $\mathscr{H}=\mathscr{H}_{r,n}$, is defined via parameters $q$ and $Q_1,Q_2,\ldots,Q_r\in\mathbb{F}$ as the unital associative algebra with generators $T_0,T_1,\ldots,T_{n-1} $, subject to the following relations:
\begin{align*}
\left(T_0-Q_1\right)\left(T_0-Q_2\right)\cdots\left(T_0-Q_r\right)&=0\\
T_0T_1T_0T_1&=T_1T_0T_1T_0\\\left(T_i-q\right)\left(T_i+1\right)&=0 &\text{for }1\leq i\leq n-1,\\T_iT_{i+1}T_i&=T_{i+1}T_iT_{i+1} &\text{for }1\leq i<n-1,\\T_iT_j&=T_jT_i &\text{for }0\leq i<j-1<n-1.
\end{align*}
The condition that $q\neq1$ is necessary because otherwise  the corresponding theory demands a `degenerate' version of the Ariki-Koike algebra, which we do not go into here.     
 
If $w=s_{i_1}s_{i_2}\cdots s_{i_k}\in\mathfrak{S}_n$ is a reduced expression, where $s_i$ is the simple transposition $(i,i+1)$, then we define $T_{w}=T_{i_1}T_{i_2}\cdots T_{i_k}$.  We will usually write $T_{i_1}T_{i_2}\cdots T_{i_k}$ as $T_{i_1,i_2,\ldots,i_k}$. 

 It is clear that the subalgebra generated by $T_{1},T_{2},\ldots ,T_{n-1}$ is isomorphic to the Iwahori-Hecke algebra of type $\mathrm{A}$.  This fact will allow us to make use of its representation theory in establishing a number of  the results which follow. 

For each $1\leq k\leq n$ we define elements \begin{displaymath}L_k:=q^{1-k}T_{k-1}T_{k-2}\cdots T_1T_0T_1\cdots T_{k-2}T_{k-1}.\end{displaymath}
These elements are an analogue of the Jucys-Murphy elements of the Iwahori-Hecke algebra of type $A$ and of the group algebra of the symmetric group.

  Some of the properties of these elements which will repeatedly prove useful are summarised in the following proposition (for further details, see \cite{DJM}) : 
\begin{prop}\label{Jucys}\mbox{}\begin{enumerate}\item The elements $L_i$ generate an abelian subalgebra of $\mathscr{H}$;
\item Let $l$ be some non-negative integer.  If $j\neq k$ then $T_j$ and $\prod^{k}_{i=1}(L_i-Q_l)$ commute; and
\item $L_iT_j=T_jL_i$ whenever $j\neq i, i-1$.  
\end{enumerate}
\end{prop}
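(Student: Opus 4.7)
My plan is to prove the three parts in the order (3), (1), (2), since (1) builds on (3) and (2) builds on both.

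For (3), I proceed by induction on $i$ using the recursion $L_{i+1} = q^{-1} T_i L_i T_i$ that is immediate from the definition. When $j \geq i+1$, every generator $T_0, T_1, \ldots, T_{i-1}$ appearing in $L_i$ commutes with $T_j$ directly from the presentation, so $T_j L_i = L_i T_j$. When $j \leq i-3$, I rewrite $L_i = q^{-1} T_{i-1} L_{i-1} T_{i-1}$ and push $T_j$ through using $T_j T_{i-1} = T_{i-1} T_j$ and the inductive hypothesis applied to $L_{i-1}$. The delicate case is $j = i-2$: here I unfold once more to obtain $L_i = q^{-2} T_{i-1} T_{i-2} L_{i-2} T_{i-2} T_{i-1}$ and apply the braid relation $T_{i-2} T_{i-1} T_{i-2} = T_{i-1} T_{i-2} T_{i-1}$ twice, interspersed with the commutativity of $T_{i-1}$ with $L_{i-2}$ (from the inductive hypothesis); for the initial case $i=2$ this reduces instead to $T_0 T_1 T_0 T_1 = T_1 T_0 T_1 T_0$.

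For (1), I induct on $n = \max(i,j)$, showing that $L_a$ commutes with $L_n$ for every $a < n$. The base case is $L_1 L_2 = q^{-1} T_0 T_1 T_0 T_1 = q^{-1} T_1 T_0 T_1 T_0 = L_2 L_1$, which uses the Ariki-Koike relation $T_0 T_1 T_0 T_1 = T_1 T_0 T_1 T_0$. In the inductive step, I handle the non-adjacent subcase $a \leq n-2$ first: expanding $L_n = q^{-1} T_{n-1} L_{n-1} T_{n-1}$, part (3) gives $T_{n-1} L_a = L_a T_{n-1}$, and the inductive hypothesis gives $L_{n-1} L_a = L_a L_{n-1}$, whence $L_n L_a = L_a L_n$. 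The adjacent subcase $a = n-1$ then reduces cleanly to the non-adjacent subcase just established: writing $L_{n-1} = q^{-1} T_{n-2} L_{n-2} T_{n-2}$ and using (3) to move $T_{n-2}$ past $L_n$, together with the freshly established $L_{n-2} L_n = L_n L_{n-2}$, one obtains $L_{n-1} L_n = L_n L_{n-1}$ in a few lines.

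For (2), fix $j \neq k$. If $j > k$, then for every $i \in \{1, \ldots, k\}$ we have $j \neq i$ and $j \neq i-1$, so (3) gives $T_j (L_i - Q_l) = (L_i - Q_l) T_j$ factor by factor, and hence $T_j$ commutes with the full product. If $j < k$, part (1) allows us to reorder the product so that $(L_j - Q_l)(L_{j+1} - Q_l)$ sits at one end, with the remaining factors commuting with $T_j$ by (3). Expanding, the claim reduces to showing that $T_j$ commutes with both $L_j + L_{j+1}$ and $L_j L_{j+1}$. From $T_j L_j T_j = q L_{j+1}$, multiplying by $T_j^{-1} = q^{-1}(T_j - q + 1)$ on either side yields the identities
\begin{align*}
T_j L_j &= L_{j+1} T_j - (q-1) L_{j+1}, \\
L_j T_j &= T_j L_{j+1} - (q-1) L_{j+1}.
\end{align*}
Adding them produces $T_j(L_j + L_{j+1}) = (L_j + L_{j+1}) T_j$; combining them with (1) produces $T_j L_j L_{j+1} = L_j L_{j+1} T_j$ after a short manipulation.

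I expect the most delicate step to be the $j = i-2$ case of (3), which is the one genuinely computational moment and the only place (apart from the base for (1)) where the Ariki-Koike braid relation $T_0 T_1 T_0 T_1 = T_1 T_0 T_1 T_0$ is essential. Perhaps surprisingly, the ostensibly hardest looking case -- the adjacent commutativity $L_{n-1} L_n = L_n L_{n-1}$ in (1) -- turns out to reduce cleanly to the non-adjacent case at the same induction level, so no new identity is required there.
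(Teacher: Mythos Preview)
Your proof is correct. The paper itself does not prove this proposition at all: it states the result and refers the reader to \cite{DJM} for details, so there is no in-paper argument to compare with. What you have supplied is the standard direct verification, and the logical ordering $(3)\Rightarrow(1)\Rightarrow(2)$ is the natural one.

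Two small remarks. First, in part~(2) your case split ``$j>k$'' versus ``$j<k$'' tacitly assumes $j\geq 1$, since for $j<k$ you invoke the factor $L_j$. The case $j=0$ is not covered by that argument, but it is immediate once~(1) is in hand: $T_0=L_1$ commutes with every $L_i$ and hence with the whole product. You may wish to insert a sentence to that effect. Second, in your treatment of~(3) the initial induction anchor should be stated explicitly: for $i=1$ one has $L_1=T_0$, and $T_jT_0=T_0T_j$ for all $j\geq 2$ directly from the presentation. With those two cosmetic additions the argument is complete.
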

\paragraph{Multipartitions and tableaux}
Recall that a composition of $n$ is a sequence $\lambda=\lambda_1,\lambda_2,\ldots$ of non-negative integers such that $|\lambda|=\sum_i\lambda_i=n$.  We say that $\lambda$ is a partition if it satisfies the additional condition that $\lambda_{i}\geq\lambda_{i+1}$ for all $i\geq1$.     

A multicomposition $\mu$ of $n$ in $r$ parts is a sequence of compositions $\mu^{(i)}$ for $1\leq i\leq r$, written
\begin{displaymath} 
\mu=\left(\mu^{(1)},\mu^{(2)},\ldots,\mu^{(r)}\right),
\end{displaymath} such that $|\mu|=\sum|\mu^{(i)}|=n$.  For every $1\leq i\leq r$,  the composition $\mu^{(i)}$ is the $i$-th component or part of $\mu$.  Additionally,   a multicomposition in which each component is a partition is a multipartition of $n$ in $r$ parts.  For brevity, we will often refer to these as `$r$-multicompositions' or multipartitions of $n$, or simply as multicompositions and multipartitions.   

The set of multipartitions and multicompositions of $n$ in $r$ parts can be partially ordered under the dominance relation; that is, $\mu$ is said to dominate $\lambda$, in which case we write $\lambda\unlhd\mu$, if and only if
\begin{displaymath}
\sum^{l-1}_{i=0}|\lambda^{(i)}|+\sum_{j=0}^s \lambda^{(l)}_j\leq\sum^{l-1}_{i=0}|\mu^{(i)}|+\sum_{j=0}^s\mu^{(l)}_j
\end{displaymath}
for all positive integers $l$ and $s$.

As is the case for compositions, we can represent  multicompositions via  Young diagrams; the diagram of a multicomposition $\mu$ being a sequence of Young diagrams, each  corresponding to the successive compositions which form the components of $\mu$.  More formally, the diagram $[\mu]$ of  a multicomposition $\mu$ is defined thus
\begin{displaymath}
\left[\mu\right]=\left\{\left(i,j,k\right)\in\mathbb{N}^2\times\left\{1,2,\ldots,r\right\}:\lambda^{(k)}_i\leq j\right\}.
\end{displaymath}
Each element of $[\mu]$ is called a node of the diagram. 

Given a multicomposition $\lambda$ we define a $\lambda$-tableau to be a bijection $\mathfrak{t}:[\lambda]\rightarrow\{1,2,\ldots,n\}$.  Such a tableau may be viewed as filling the nodes of $[\lambda]$ with entries taken from $\{1,2,\ldots,n\}$.  For each composition or partition $\lambda$, a $\lambda$-tableau $\mathfrak{t}$ is row standard if its entries are strictly increasing along the rows of each component, and standard if $\lambda$ is a multipartition and, in addition to being row standard, its entries are strictly  increasing down the columns of every component.

 We use $\mathfrak{t}^{\lambda}$ to distinguish the unique $\lambda$-tableau where the entries appear in order along the rows.   If $\mathfrak{t}$ is any other $\lambda$-tableau, let $d(\mathfrak{t})$ denote the unique permutation such that $\mathfrak{t}=\mathfrak{t}^{\lambda}\cdot d(\mathfrak{t})$. Here the action of $\mathfrak{S}_n$ on the set of tableaux is the obvious permutation of entries.     
\begin{exm}
Suppose $\lambda=((2,2,1),(3,1))$, then 
\begin{displaymath}
\mathfrak{t}^\lambda=\left(\,\Yvcentermath1\young(12,34,5)\,,\,\Yvcentermath1\young(678,9)\,\right).
\end{displaymath}  

If $\mathfrak{t}$ is given by 
\begin{displaymath}
\mathfrak{t}=\left(\,\Yvcentermath1\young(13,25,9)\,,\,\Yvcentermath1\young(478,6)\,\right),
\end{displaymath}
then $d(\mathfrak{t})=(2,3)(4,5,9,6)$.
\end{exm} 

\subsubsection{Specht Modules and Semi-standard Homomorphisms}For each multicomposition $\lambda$, let $x_{\lambda}$ be the element of $\mathscr{H}$ which is the sum of all $T_w$ as  $w\in\mathfrak{S}_n$ ranges over all permutations which stabilize the rows of $\mathfrak{t}^{\lambda}$.  Also, let  $u^+_{\lambda}$ be given by
\begin{displaymath}
 u^+_{\lambda}=\prod_{s=2}^r\prod_{i=1}^{|\lambda^{(1)}|+\cdots+|\lambda^{(s-1)}|}(L_i-Q_{s}).
 \end{displaymath}
 Note that $x_{\lambda}$ and $u^+_{\lambda}$ commute with one another.  

We set $m_{\lambda}=x_{\lambda}u^+_{\lambda}=u^+_\lambda x_\lambda$ and define $M^{\lambda}$ to be the right $\mathscr{H}$-module generated by $m_{\lambda}$.  These modules are in some sense an analogue of the permutation modules for the group algebra of $\mathfrak{S}_{n}$, and we will informally refer to them as such.  

If we define $*:\mathscr{H}\rightarrow\mathscr{H}$ to the anti-isomorphism  defined by $T_w\mapsto T_{w^{-1}}$, there is a cellular $\mathbb{F}$-basis of $\mathscr{H}$ provided by
\begin{displaymath}
\left\{m_{\mathfrak{st}}=T^*_{d(\mathfrak{s})}m_\lambda T_{d(\mathfrak{t})}:\lambda \text{ an $r$-multipartition of $n$}, \mathfrak{s},\mathfrak{t}\in\mathrm{Std}(\lambda)\right\}
\end{displaymath}   
where $\mathrm{Std}(\lambda)$ is the set of standard $\lambda$-multipartitions of $n$ in $r$ parts.

\paragraph{Specht Modules}For each multipartition $\lambda$ of $n$ in $r$ parts, let $\check{\mathscr{H}}^{\lambda}$ be the two-sided ideal of $\mathscr{H}$ which has an $\mathbb{F}$-basis
\begin{displaymath}
\left\{m_{\mathfrak{st}}:\mathfrak{s},\mathfrak{t}\in\mathrm{Std}(\mu), \lambda\lhd\mu\right\}.
\end{displaymath} 

The Specht modules are the cell modules of the Ariki-Koike algebra, relative to the cellular basis given above.  Indexed by the $r$-multipartitions  of $n$, the Specht module $S^\lambda$ is the right $\mathscr{H}$-module generated by  $\check{\mathscr{H}}^{\lambda}+m_{\lambda}$.  As an $\mathbb{F}$ module the set
\begin{displaymath}
\left\{\check{\mathscr{H}}^\lambda +m_{\mathfrak{t}^\lambda \mathfrak{t}}:\mathfrak{t}\in\mathrm{Std}(\lambda) \right\}
\end{displaymath}
forms a basis, the elements of which we will write as $m_\mathfrak{t}$, where $m_{\mathfrak{t}}=\check{\mathscr{H}}^\lambda+m_{\mathfrak{t}^\lambda\mathfrak{t}}$ for $\mathfrak{t}\in\mathrm{Std}(\lambda)$.  
\paragraph{Semistandard Tableaux}If $\lambda$ and $\mu$ are multipartitions, then a $\mu$-tableau of type $\lambda$  is a mapping 
\begin{displaymath}\mathtt{T}:[\mu]\rightarrow\{(i,k): i\geq 1, \mathrm{and}\, 1\leq k\leq r\}\end{displaymath} 
such that the number of entries in $\mathtt{T}$ of the form $(i,k)$ for  given values of $i$ and $k$ is the same as the number of nodes in $[\lambda]$ of the form $(i,j,k)$ for some $j$.

  We impose a partial ordering $\preceq$ on the entries of $\mathtt{T}$ by specifying that 

\begin{displaymath}(i,k)\preceq (i',k') \text{ if and only if }k< k', \text{ or } k=k' \text{ and } i\leq i'\end{displaymath}
The purpose of this ordering being to allow the definition of a semistandard tableau, that being a $\mu$-tableau $\mathtt{T}$ of type $\lambda$ such that:

\begin{enumerate}
\item the entries in each row are non-decreasing in each component, relative to the partial ordering just defined;\item the entries are strictly increasing down each column of every component; and \item if $(a,b,c)\in[\mu]$ and $\mathtt{T}(a,b,c)=(i,k)$, then $c\leq k$.  
\end{enumerate}
We will also refer to row-semistandard $\mu$-tableau of type $\lambda$, which we define to be $\mu$-tableau of type $\lambda$ in which the entries are weakly increasing along the rows of each component. We denote the set of semistandard $\mu$-tableaux of type $\lambda$ by $\mathcal{T}_0(\mu,\lambda)$, and the set of row-semistandard $\mu$-tableaux of type $\lambda$ by $\mathcal{T}_r(\mu,\lambda)$.

If $\mathfrak{t}$ is a $\mu$-tableau, let $\lambda(\mathfrak{t})$ be the $\mu$-tableau of type $\lambda$ in which the node $(i,j,k)\in[\mu]$ is occupied by $(x,y)$ if the entry $\mathfrak{t}(i,j,k)$ appears in row $i$ of component $j$ of $\mathfrak{t}^\lambda$. We denote $\lambda(\mathfrak{t}^{\lambda})$ by $\mathtt{T}^\lambda$.

\begin{exm}
If $\lambda=((3,2,1),(2,1))$, $\mu=((4,3,1),(1))$, and $\mathfrak{t}$ is given by 
\begin{displaymath}
\mathfrak{t}=\left(\,\Yvcentermath1\young(1369,257,4)\,,\,\Yvcentermath1\young(8)\, \right),
\end{displaymath}
then
\begin{align*}
\lambda(\mathfrak{t})&=\left(\,\Yvcentermath1\young(\oneone\oneone\threeone\twotwo,\oneone\twoone\onetwo,\twoone)\,,\,\Yvcentermath1\young(\onetwo)\, \right) &&\mathtt{T}^{\lambda}=\left(\,\Yvcentermath1\young(\oneone\oneone\oneone,\twoone\twoone,\threeone)\,,\,\Yvcentermath1\young(\onetwo\onetwo,\twotwo)\, \right).
\end{align*}
\end{exm}    

When it comes to actually writing down examples of such tableaux, we will let $i_k$ represent the entry $(i,k)$, such a format being better suited to the context of a diagram.   

\paragraph{Homomorphisms}
Given a $\mu$-tableau $\mathfrak{t}$ we write $\lambda(\mathfrak{t})$ for the $\mu$-tableau of type $\lambda$ obtained by replacing every entry in $\mathfrak{t}$ with $(i,k)$ whenever that entry appears in the $i$-th row of the $k$-th component of $\mathfrak{t}^{\lambda}$. 

Let $\lambda$ and $\mu$ be multipartitions with $\lambda\unlhd\mu$ and let $\mathtt{T}$ be a semistandard $\mu$ tableau of type $\lambda$.  
We can define a homomorphisms from $M^{\lambda}$ to $S^{\mu}$ by 
\begin{displaymath}
\varphi_{\mathtt{T}}\left(m_{\lambda}h\right)=\check{\mathscr{H}}^{\mu}+\left(m_{\mu}\sum _{\stackrel{\mathfrak{t}\in\mathrm{Std}\left(\mu\right)}{\lambda(\mathfrak{t})=\mathtt{T}}}T_{d(\mathfrak{t})}\right)h
\end{displaymath} 
for $h\in\mathscr{H}$.
\begin{exm}
Suppose that $\lambda=((3,2,1),(2,2))$ , $\mu=((4,2,1),(2,1)),$ and that $\mathtt{T}\in\mathcal{T}_0(\mu,\lambda)$ is given by
\begin{displaymath}
\mathtt{T}=\left(\,\Yvcentermath1\young(\oneone\oneone\oneone\twoone,\twoone\threeone,\onetwo)\,,\,\Yvcentermath1\young(\onetwo\twotwo,\twotwo)\,\right).
\end{displaymath}
Then 
\begin{align*}
\varphi_{\mathtt{T}}(m_{\lambda}h)=\check{\mathscr{H}}^{\mu}&+(1+T_1)(1+T_2+T_2T_1)(1+T_3+T_3T_2+T_3T_2T_1)\\ &\times(1+T_5)(1+T_8)\\ 
&\times(L_1-Q_2)(L_2-Q_2)\cdots(L_7-Q_2)\\
&\times(1+T_4)(1+T_7)(1+T_9)
h.
\end{align*}
\end{exm}

\section{Main Results}
Given some multipartition $\lambda$, we can define the natural projection $\pi_{\lambda}:M^{\lambda}\rightarrow S^{\lambda}$ by $$\pi_{\lambda}(m_{\lambda})=\check{\mathscr{H}}^{\lambda}+m_{\lambda}.$$
Letting $\Theta:M^{\lambda}\rightarrow S^{\mu}$ be any homomorphism, we see that $\Theta$ factors through $S^{\lambda}$ if and only if $\Theta(m_{\lambda}h)=0 $ for all $h\in\mathscr{H}$ with $m_{\lambda}h\in\check{\mathscr{H}}^{\lambda}$.  If this is the case, then $\Theta$ determines a homomorphism $\tilde{\Theta}:S^{\lambda}\rightarrow S^{\mu}$  such that the following diagram commutes:
\begin{displaymath}
\xymatrix{
M^{\lambda}\ar[rr]^{\pi_{\lambda}}\ar[dr]_{\Theta} &&S^{\lambda}\ar[ld]^{\tilde{\Theta}}\\ &S^{\mu}
}
\end{displaymath} 

Our interest lies in providing some criteria for such homomorphisms to satisfy this condition. 

The strategy pursued in this paper, like the preceding discussion, closely follows that of \cite{Lyle1}.  In particular, we construct two families of elements $\mathfrak{d}^{(s)}_{d,t}$ and $\mathfrak{l}^{(s')}$ with the property that $\Theta(m_\lambda h)=0$ for all $h\in\mathscr{H}$ such that $m_\lambda h\in\check{\mathscr{H}}^\lambda$ if and only if $\Theta(m_{\lambda}\mathfrak{d}^{(s)}_{d,t})=0$ and $\Theta(m_{\lambda}\mathfrak{l}^{(s')})=0$ for all indices over which such elements are defined.  
This amounts to proving that the right ideal generated by the set of elements of the form $m_{\lambda}\mathfrak{d}^{(s)}_{d,t}$ and $m_{\lambda}\mathfrak{l}^{(s')}$ is equal to the right ideal $M^{\lambda}\cap\check{\mathscr{H}}^{\lambda}$.     

\subsection{Semistandard Homomorphisms}

Let $\nu$ be any $r$-multicomposition of $n,$ and for $i=1,2,\ldots, r$ let $\rho_i(\nu)$ stand for the number of rows appearing in the $i$-th component of $\nu$.  For integers $1\leq s\leq r$ and $0\leq k\leq \rho_s(\nu)$, define \begin{displaymath}
\overline{\nu}_{(k,s)}=\sum_{j=1}^{s}\left|\nu^{(j-1)}\right|+\sum_{i=0}^k \nu^{(s)}_i\phantom{iii}\text{ and }\phantom{iii}\overline{\nu}_{(s)}=\overline{\nu}_{(0,s)}=\sum_{j=1}^s \left|\nu^{(j-1)} \right|.
\end{displaymath}
Also, if $\eta$ is any composition let $\overline{\eta}=|\eta|$ and let $\mathfrak{S}_{(m,\eta)}$ denote the symmetric group acting on the letters $m+1,m+2,\ldots,m+\overline{\eta}$ for some positive integer $m$. With $\mathscr{D}_{m,\eta}$ denoting the set of right coset representatives of $\mathfrak{S}_{(m+1, \eta_1,\ldots,\eta_l)}\cap\mathfrak{S}_{( m,\eta)}$ in $\mathfrak{S}_{(m,\eta)}$ of minimal length, we set 
\begin{displaymath}C\left( m\,;\,\eta\right)=C\left(m\,;\,(\eta_1,\eta_2,\ldots,\eta_l)\right)=\sum_{w\in\mathscr{D}_{m,\eta}}T_w.\end{displaymath}

Suppose that $\mathfrak{t}$ is the $\eta$ tableau in which the entries $m+1,m+2,\ldots, m+\overline{\eta}$ appear in numerical order along the rows, starting with the first row and working down to the last.  Then $\mathrm{C}(m; \eta)$ is simply the sum of elements of $\mathscr{H}$ which are indexed by permutations in $\mathfrak{S}_{(m,\eta)}$ which keeps $\mathfrak{t}$ row standard.  

Then, for $1\leq s\leq r, 1\leq d<\rho_s(\nu)$, and $1\leq t\leq \nu^{(s)}_{d+1}$, define 
$$\mathfrak{d}^{(s)}_{d,t}=\mathrm{C}\left(\overline{\nu}_{(d-1,s)}\,;\,\left(\nu^{(s)}_d,t\right)\right).$$
For the sake of convenience, in what follows we will let $\mathrm{def}(\nu, \mathfrak{d})$ denote the set of triples $(s,d,t)$ for which $\mathfrak{d}^{(s)}_{d,t}$ is defined.

These elements fulfill a similar role to that played by the $h_{d,t}$ elements of {\cite{Lyle1}, and hence also that of the $\psi_{d,t}$ maps involved in the Kernel Intersection Theorem of \cite{DJ1}.  

Working with multipartitions necessitates we introduce a second family of elements of $\mathscr{H}$ if we're to generate the ideal $M^\nu\cap\check{\mathscr{H}}^\nu$.   Let $\mathrm{def}(\nu,\mathfrak{l})$ denote the set 
  \begin{displaymath}
  \mathrm{def}(\nu,\mathfrak{l})=\{s'\in\mathbb{Z}:1\leq s'\leq r-1, \nu^{(s'+1)}\neq\emptyset\}.
  \end{displaymath} Then, for every $s'\in\mathrm{def}(\nu,\mathfrak{l})$ we define elements
\begin{displaymath}
\mathfrak{l}^{(s')}=\left(L_{\overline{\nu}_{(s'+1)}+1}-Q_{s'+1}\right).
\end{displaymath}

\begin{exm}
Let $\lambda=((3,1),(2,2),(2,1,1))$.  Then 
\begin{align*}
&\mathfrak{d}^{(1)}_{1,1}=1+T_3+T_3T_2+T_3T_2T_1 &&\mathfrak{d}^{(2)}_{1,1}=1+T_6+T_6T_5\\
&\mathfrak{d}^{(2)}_{1,2}=1+T_6+T_6T_5+T_6T_7+T_6T_7T_5+T_6T_7T_5T_6 
\\ &\mathfrak{d}^{(3)}_{1,1}=1+T_{10}+T_{10}T_9 &&\mathfrak{d}^{(3)}_{2,1}=1+T_{11}\\
&\mathfrak{l}^{(1)}=L_5-Q_2 &&\mathfrak{l}^{(2)}=L_9-Q_3.
\end{align*}
\end{exm}

Setting$$\mathbf{L}:=\{m_{\nu}\mathfrak{l}^{(s')}:s'\in\mathrm{def}(\nu,\mathfrak{l})\} \text{ and }\mathbf{D}:=\{m_{\nu}\mathfrak{d}^{(s)}_{d,t}:(s,d,t)\in\mathrm{def}(\nu,\mathfrak{d})\},$$
we then state the main results of this paper.  
\begin{thm}\label{Ideal1}
Let $\lambda$ be an $r$-multipartition of $n$ and let $\mathfrak{I}$ be the right ideal generated by the set $\mathbf{L}\cup\mathbf{D}$.  Then
\begin{displaymath}\mathfrak{I}=M^{\lambda}\cap\check{\mathscr{H}}^{\lambda}.\end{displaymath}
\end{thm}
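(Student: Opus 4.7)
The approach is to prove the two inclusions separately, the forward one by an essentially computational check and the reverse one by bounding $\dim_{\mathbb F}(M^{\lambda}/\mathfrak{I})$ from above by $|\mathrm{Std}(\lambda)| = \dim_{\mathbb F} S^{\lambda}$ and appealing to the short exact sequence $0\to M^{\lambda}\cap\check{\mathscr{H}}^{\lambda}\to M^{\lambda}\to S^{\lambda}\to 0$. Once both inclusions are established, equality follows.

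For the easy inclusion $\mathfrak{I}\subseteq M^{\lambda}\cap\check{\mathscr{H}}^{\lambda}$: every generator of $\mathfrak{I}$ has the form $m_{\lambda}\cdot x$ and so lies in $M^{\lambda}$ automatically; it remains to verify membership in $\check{\mathscr{H}}^{\lambda}$. I would expand each generator in the cellular basis and show that only terms $m_{\mathfrak{s}\mathfrak{t}}$ with indexing shape $\mu\rhd\lambda$ appear. For $m_{\lambda}\mathfrak{d}^{(s)}_{d,t}$ this is the standard Garnir-type calculation: the double-coset sum $C(\overline{\lambda}_{(d-1,s)};(\lambda^{(s)}_d,t))$ acting on $x_{\lambda}$ produces a scalar multiple of $x_{\mu}$ for the multicomposition $\mu$ obtained from $\lambda$ by moving $t$ boxes from row $d+1$ into row $d$ of component $s$, and such $\mu$ strictly dominates $\lambda$. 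For $m_{\lambda}\mathfrak{l}^{(s')}$ I would use Proposition \ref{Jucys} to commute past $x_{\lambda}$: the extra factor $(L_{\overline{\lambda}_{(s'+1)}+1}-Q_{s'+1})$ combines with $u^+_{\lambda}$ to supply exactly the additional $(L_{i}-Q_{s'+1})$-factor present in $u^+_{\mu}$ when $\mu$ is obtained from $\lambda$ by lifting the top-left node of component $s'+1$ into an earlier component; such a $\mu$ again strictly dominates $\lambda$.

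For the reverse inclusion, once the easy direction is in hand there is a natural surjection $M^{\lambda}/\mathfrak{I}\twoheadrightarrow S^{\lambda}$, so it suffices to exhibit a spanning set of $M^{\lambda}/\mathfrak{I}$ indexed by $\mathrm{Std}(\lambda)$. The module $M^{\lambda}$ is spanned over $\mathbb F$ by the elements $m_{\lambda}T_{d(\mathfrak{t})}$ with $\mathfrak{t}$ row-standard, and my plan is to run a straightening algorithm, inducting on the dominance order of the indexing multipartition (and secondarily on some combinatorial complexity of $\mathfrak{t}$). Given a row-standard $\mathfrak{t}$ that is not standard, I locate either the first column-inversion inside some component or the first violation of the component-compatibility condition coming from the cyclotomic parameters; in the first case a right multiplication by an appropriate $\mathfrak{d}^{(s)}_{d,t}$ rewrites $m_{\lambda}T_{d(\mathfrak{t})}$, modulo $\mathfrak{I}$, as a linear combination of $m_{\lambda}T_{d(\mathfrak{u})}$ with $\mathfrak{u}$ either standard or having indexing shape $\mu\rhd\lambda$; in the second case an appropriate $\mathfrak{l}^{(s')}$ does the same. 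The dominance-inductive hypothesis disposes of the $\mu\rhd\lambda$ terms since they lie in $\check{\mathscr{H}}^{\lambda}$ and, by induction, in $\mathfrak{I}$.

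The main obstacle, and the place where the argument genuinely goes beyond Lyle's type-$A$ proof, is the coordination of the two families of relations in the straightening algorithm. In type $A$ the $\mathfrak{d}^{(s)}_{d,t}$-relations alone suffice because there is a single component; here one must show that the $\mathfrak{l}^{(s')}$-relations precisely kill the additional obstructions coming from the cyclotomic parameters $Q_1,\ldots,Q_r$, and that the two families together are enough to reduce every row-standard tableau to a linear combination of standard ones modulo $\mathfrak{I}$. Concretely, the technical heart of the proof is to describe, for each non-semistandard row-standard $\mathfrak{t}$, an explicit rewrite using only right multiplications by elements of $\mathbf{L}\cup\mathbf{D}$, together with a verification that the resulting induction on dominance terminates.
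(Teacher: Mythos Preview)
Your treatment of the forward inclusion $\mathfrak{I}\subseteq M^{\lambda}\cap\check{\mathscr{H}}^{\lambda}$ is essentially what the paper does and is fine.

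The reverse inclusion, however, has a genuine gap. Your dimension argument rests on the claim that $M^{\lambda}$ is spanned over $\mathbb{F}$ by the elements $m_{\lambda}T_{d(\mathfrak{t})}$ with $\mathfrak{t}$ a row-standard $\lambda$-tableau. This is true for $r=1$ but false for $r>1$: already for $r=2$, $n=1$, $\lambda=(\emptyset,(1))$ one has $m_{\lambda}=1$ and $M^{\lambda}=\mathscr{H}$ is two-dimensional, while there is only one row-standard $\lambda$-tableau. In general $M^{\lambda}$ has the cellular basis $\{m_{\mathtt{S}\mathfrak{t}}:\mathtt{S}\in\mathcal{T}_0(\nu,\lambda),\ \mathfrak{t}\in\mathrm{Std}(\nu),\ \nu\unrhd\lambda\}$, and the Jucys--Murphy elements $L_i$ genuinely enlarge $m_{\lambda}\mathscr{H}$ beyond the span of $\{m_{\lambda}T_w\}$. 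Relatedly, your description of the role of $\mathfrak{l}^{(s')}$ as handling a ``component-compatibility condition'' on row-standard $\lambda$-tableaux does not correspond to anything real: standardness of a $\lambda$-tableau involves no cyclotomic condition, so there is nothing for $\mathfrak{l}^{(s')}$ to straighten in your scheme.

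The paper proceeds quite differently for this direction. It invokes the known basis of $M^{\lambda}\cap\check{\mathscr{H}}^{\lambda}$ consisting of the $m_{\mathtt{S}\mathfrak{t}}$ with $\nu\rhd\lambda$, and shows directly that each $m_{\mathtt{S}\mathfrak{t}^{\nu}}$ lies in $\mathfrak{I}$. The argument splits according to whether $|\nu^{(i)}|=|\lambda^{(i)}|$ for all $i$ (``componentwise shifts'', where $u^{+}_{\nu}=u^{+}_{\lambda}$ and one reduces to Lyle's type-$A$ result applied inside a single component) or not (``cross-component shifts''). In the latter case the heart of the proof is an analysis of how $T_{\mathtt{S}}$ interacts with $u^{+}_{\nu}$: one factors $T_{\mathtt{S}}$ through a chain of tableaux $\mathfrak{t}_{\mathtt{S}(i)}$ so as to peel $u^{+}_{\nu}$ down to $u^{+}_{\lambda}$ times a residual factor $u^{+}_{\nu^{(k-1)}\setminus\lambda^{(k-1)}}$ containing $\mathfrak{l}^{(k-1)}$, and then shows that the remaining obstruction $\mathrm{D}(x^{*},x)$ can be rewritten as an $\mathfrak{l}$-term plus a combination of $\mathfrak{d}^{(r)}_{i,1}$-terms. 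This is where the $\mathfrak{l}^{(s')}$ relations actually enter: they absorb the extra Jucys--Murphy factors present in $u^{+}_{\nu}$ but not in $u^{+}_{\lambda}$, not any condition on $\lambda$-tableaux.
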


The proof of the above theorem is technical and fairly lengthy, and so we postpone it until the next section of the paper.  For now we focus on the consequences and possible applications of Theorem \ref{Ideal1}.  
\begin{cor}\label{Build}
Suppose that $\lambda$ and $\mu$ are multipartitions with $\lambda\unlhd\mu$, and that $\Theta:M^{\lambda}\rightarrow S^{\mu}$.  Then, for each $h\in\mathscr{H}$ with $m_{\lambda}h\in\check{\mathscr{H}}^{\lambda}$, 
\begin{displaymath}
\Theta(m_{\lambda}h)=0 \text{ if and only if }\Theta \left(m_{\lambda}\mathfrak{d}^{(s)}_{d,t}\right)=0\text{ and } \Theta\left(m_{\lambda}\mathfrak{l}^{(s')}\right)=0
\end{displaymath}
for all $(s,d,t)\in\mathrm{def}(\lambda, \mathfrak{d})$, and all $s'\in\mathrm{def}(\lambda,\mathfrak{l})$.
\end{cor}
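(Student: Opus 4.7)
The plan is to deduce the corollary directly from Theorem \ref{Ideal1}, exploiting the fact that $\Theta$ is a right $\mathscr{H}$-module homomorphism. Write $\mathfrak{I}$ for the right ideal generated by $\mathbf{L}\cup\mathbf{D}$ with $\nu$ replaced by $\lambda$; by the theorem, $\mathfrak{I}=M^{\lambda}\cap\check{\mathscr{H}}^{\lambda}$. Since $\Theta$ is right $\mathscr{H}$-linear, its restriction to $\mathfrak{I}$ is completely determined by its values on any generating set. This reduces both directions of the biconditional to routine observations.

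For the forward implication, suppose $\Theta(m_{\lambda}h)=0$ for every $h\in\mathscr{H}$ with $m_{\lambda}h\in\check{\mathscr{H}}^{\lambda}$. Each generator $m_{\lambda}\mathfrak{d}^{(s)}_{d,t}$ and $m_{\lambda}\mathfrak{l}^{(s')}$ lies in $\mathfrak{I}=M^{\lambda}\cap\check{\mathscr{H}}^{\lambda}$ by Theorem \ref{Ideal1}, so in particular each is of the form $m_{\lambda}h$ with $m_{\lambda}h\in\check{\mathscr{H}}^{\lambda}$, and the hypothesis forces $\Theta(m_{\lambda}\mathfrak{d}^{(s)}_{d,t})=0$ and $\Theta(m_{\lambda}\mathfrak{l}^{(s')})=0$.

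For the reverse implication, assume $\Theta$ kills every generator in $\mathbf{L}\cup\mathbf{D}$. Let $h\in\mathscr{H}$ satisfy $m_{\lambda}h\in\check{\mathscr{H}}^{\lambda}$, so that $m_{\lambda}h\in M^{\lambda}\cap\check{\mathscr{H}}^{\lambda}=\mathfrak{I}$. By definition of the right ideal generated by $\mathbf{L}\cup\mathbf{D}$, we may write
\begin{displaymath}
m_{\lambda}h=\sum_{(s,d,t)\in\mathrm{def}(\lambda,\mathfrak{d})} m_{\lambda}\mathfrak{d}^{(s)}_{d,t} h_{s,d,t}+\sum_{s'\in\mathrm{def}(\lambda,\mathfrak{l})} m_{\lambda}\mathfrak{l}^{(s')} h'_{s'}
\end{displaymath}
for suitable elements $h_{s,d,t},h'_{s'}\in\mathscr{H}$ (almost all zero). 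Applying $\Theta$ and using its right $\mathscr{H}$-linearity term by term gives $\Theta(m_{\lambda}h)=0$.

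There is no real obstacle here: all of the technical content sits inside Theorem \ref{Ideal1}, and the corollary is just the translation of the ideal-theoretic statement ``$\mathfrak{I}=M^{\lambda}\cap\check{\mathscr{H}}^{\lambda}$'' into the language of module homomorphisms via the principle that a right module map is determined on an ideal by its values on generators.
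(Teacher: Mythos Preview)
Your proof is correct and is precisely the intended deduction: the paper does not give a separate argument for Corollary~\ref{Build} at all, treating it as an immediate consequence of Theorem~\ref{Ideal1}, and your write-up is exactly the routine unpacking of that implication via right $\mathscr{H}$-linearity.
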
  

\paragraph{Constructing Homomorphisms} When it comes to explicitly constructing homomorphisms between Specht modules our task is to determine precisely when the conditions laid out in Corollary \ref{Build} hold. In principle this can be done by examining the action of the elements of $\mathbf{L}$ and $\mathbf{D}$ on the basis elements of $\mathrm{Hom}(M^\lambda,S^\mu)$.  

We now describe $r$-multicompositions of $n$ associated with the elements just defined. Suppose that $\lambda$ is a multipartition and that $(s,d,t)\in\mathrm{def}(\lambda, \mathfrak{d})$ and $s'\in\mathrm{def}(\lambda,\mathfrak{l} )$.  We can define  multicompositions $\lambda\cdot\mathfrak{d}^{(s)}_{d,t}$ and $\lambda\cdot\mathfrak{l}^{(s')}$  given by
\begin{displaymath}
\left(\lambda\cdot\mathfrak{d}^{(s)}_{d,t}\right)^{(l)}_i=\left\{
\begin{array}{ll}
\lambda^{(s)}_d+t &\text{if }l=s\text{ and }i=d\\
\lambda^{(s)}_{d+1}-t &\text{if }l=s \text{ and }i=d+1\\
\lambda^{(l)}_i &\text{otherwise}.
\end{array}
\right.
\end{displaymath} 
and
\begin{displaymath}
\left(\lambda\cdot\mathfrak{l}^{(s')}\right)^{(k)}_j=\left\{
\begin{array}{ll}
1 &\text{if }k=s'\text{ and }j=\rho_{s'}(\lambda)+1\\
\lambda^{(s'+1)}_1-1&\text{if }k=s'+1\text{ and }j=1\\
\lambda^{(k)}_j&\text{otherwise.}
\end{array}
\right.
\end{displaymath}
In other words, $\lambda\cdot\mathfrak{d}^{(s)}_{d,t}$ is the multicomposition whose diagram is formed by raising the last $t$ nodes of the $(d+1)$-th row of the $s$-th component of $[\lambda]$ to the end of the $d$-th row of the same component.  Similarly $\lambda\cdot\mathfrak{l}^{(s')}$ is the multicomposition whose diagram is obtained by removing the last node from the first row of component $s'+1$ of $[\lambda]$ and inserting a new row consisting of a single node at the bottom of component $s'$.
\begin{exm}
Let $\lambda=((3,1),(2,2),(2,1,1))$.  Then
\begin{align*}
&\left[\lambda\cdot\mathfrak{d}^{(2)}_{1,2}\right]=\left(\, \Yvcentermath1\yng(3,1)\,,\,\Yvcentermath1\yng(4)\,,\,\Yvcentermath1\yng(2,1,1)\, \right), \text{and}
\\ &\left[\lambda\cdot\mathfrak{l}^{(1)}\right]=\left(\,\Yvcentermath1\yng(3,1,1)\,,\,\Yvcentermath1\yng(1,2)\,,\,\Yvcentermath1\yng(2,1,1)\, \right).
\end{align*}
\end{exm}  

The idea behind defining these various multicompositions is that if we have a homomorphism $\Theta:M^\lambda\rightarrow S^\mu$ which can be expressed as a linear combination
\begin{displaymath}
\Theta=\sum_{\mathtt{T}\in\mathcal{T}_0(\mu,\lambda)} a_\mathtt{T}\varphi_{\mathtt{T}}
\end{displaymath}
with $a_\mathtt{T}\in\mathbb{F}$, then it should be possible to show that acting on each $\varphi_\mathtt{T}$ by, say, $\mathfrak{d}^{(s)}_{d,t}\in\mathbf{D}$ produces a linear combination of homomorphisms
\begin{displaymath}
\psi_\mathtt{S}:M^{\lambda\cdot\mathfrak{d}^{(s)}_{d,t}}\rightarrow S^\mu
\end{displaymath}
where $\mathtt{S}$ ranges over the semistandard $\mu$-tableaux of type $\lambda\cdot\mathfrak{d}^{(s)}_{d,t}$. 

Given that the tableaux $\mathtt{S}$ ranges over are semistandard, the set of homomorphisms determined by them are linearly independent \cite[Corollary 6.14]{DJM}. 
The condition in Corollary \ref{Build} then becomes a matter of collecting terms in $\Theta\mathfrak{d}^{(s)}_{d,t}$ and setting the coefficients in $\mathbb{F}$ to zero.  A similar procedure applies to when we're acting on $\Theta$ by an element of $\mathbf{L}$.  We provide a demonstration of this process in the final section of this paper.  

\section{Proof of Theorem \ref{Ideal1}}
\paragraph{Preliminaries}Our proof centres  on the ability to confine a lot of our attention to the subalgebra of $\mathscr{H}$  isomorphic to the Iwahori-Hecke algebra of type $\mathrm{A}$, thus allowing us to use results from that setting.

A major aspect of our approach will be to treat an $r$-multipartition of $n$ as a composition of $n$.   In particular, let $\lambda$ be  a multipartition of $n$ in $r$ parts and let $\alpha(\lambda)$ be the composition of $n$ determined by the diagram formed by `stacking' the components of $[\lambda]$ on top of one another, such that $[\lambda^{(i+1)}]$ appears immediately below $[\lambda^{(i)}]$ for all $1\leq i\leq r-1$.   Note also that a standard $\lambda$-tableau in $\mathscr{H}$ corresponds to a row-standard $\alpha(\lambda)$-tableau.

For any pair of multipartitions $\lambda$ and $\mu$ we can perform the same kind of procedure on $\mu$-tableaux of type $\lambda$.  If $\mathtt{S}\in\mathcal{T}(\mu,\lambda)$, then define $\alpha(\mathtt{S})$ to be the $\alpha(\mu)$-tableau of type $\alpha(\lambda)$ formed by stacking the components of $\mathtt{S}$ in the same manner as was done in the previous construction.  To complete this process, relabel every entry in $\alpha(\mathtt{S})$ using the rule  
\begin{displaymath}(u.v)\mapsto u+\rho_{v-1}(\lambda).
\end{displaymath}
It's clear that if $\mathtt{S}$ is semistandard, then $\alpha(\mathtt{S})$ is row-semistandard.  
\begin{exm}
If $\mu=((3,2),(2,1,1))$, then $\alpha({\mu})=(3,2,2,1,1)$.   Additionally, if $\lambda=((2,1),(3, 2, 1))$ and $\mathtt{S}\in\mathcal{T}_0(\mu,\lambda)$ is given by
\begin{displaymath}
\mathtt{S}=\left(
\Yvcentermath1\young(\oneone\oneone\twotwo,\twoone\onetwo)\,,\,
\Yvcentermath1\young(\onetwo\onetwo,\twotwo,\threetwo) 
\right)
\end{displaymath}
then
\begin{displaymath}
\alpha(\mathtt{S})=\Yvcentermath1\young(114,23,33,4,5)
\end{displaymath}
\end{exm}

The first stage of proving the statement of Theorem \ref{Ideal1} is to subdivide the task into proving that $\mathfrak{I}\subseteq M^\lambda\cap\check{\mathscr{H}}^\lambda$ and then that $\mathfrak{I}\supseteq M^\lambda\cap\check{\mathscr{H}}^\lambda$. The first of these follows almost immediately from the definitions.    

\begin{lem}\label{firsty}
For every $r$-multipartition $\lambda$ of $n$ 
\begin{displaymath}\mathfrak{I}\subseteq M^{\lambda}\cap\check{\mathscr{H}}^{\lambda}.\end{displaymath}
\end{lem}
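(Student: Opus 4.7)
The containment $\mathfrak{I} \subseteq M^{\lambda}$ is tautological: each generator of $\mathbf{L} \cup \mathbf{D}$ has the form $m_{\lambda}h$ with $h \in \mathscr{H}$, hence lies in the right ideal $M^{\lambda}$, and so the right ideal $\mathfrak{I}$ they generate lies in $M^{\lambda}$ as well. Since $\check{\mathscr{H}}^{\lambda}$ is itself a two-sided ideal, the remaining containment $\mathfrak{I} \subseteq \check{\mathscr{H}}^{\lambda}$ reduces to verifying that each of the generating elements $m_{\lambda}\mathfrak{d}^{(s)}_{d,t}$ and $m_{\lambda}\mathfrak{l}^{(s')}$ lies in $\check{\mathscr{H}}^{\lambda}$ individually.

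For the $\mathbf{D}$ generators, the element $\mathfrak{d}^{(s)}_{d,t}$ is a distinguished coset sum lying entirely in the type-$\mathrm{A}$ Hecke subalgebra. Working through the $\alpha$-flattening, the classical Dipper--James coset-sum computation shows that $x_{\lambda}\mathfrak{d}^{(s)}_{d,t}$ expands in the type-$\mathrm{A}$ Murphy basis with support on basis vectors of shape strictly dominating $\alpha(\lambda)$. Since $\mathfrak{d}^{(s)}_{d,t}$ only rearranges nodes within the $s$-th component of $\lambda$, the component sizes $|\lambda^{(i)}|$ are preserved and therefore $u^+_{\mu} = u^+_{\lambda}$, where $\mu := \lambda \cdot \mathfrak{d}^{(s)}_{d,t}$. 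Multiplying through by $u^+_{\lambda}$ and invoking the compatibility between the type-$\mathrm{A}$ and Ariki-Koike cellular structures gives that $m_{\lambda}\mathfrak{d}^{(s)}_{d,t} = u^+_{\lambda}(x_{\lambda}\mathfrak{d}^{(s)}_{d,t})$ is supported on Ariki-Koike Murphy basis vectors of shape dominating the row-reordering $\mu^{+}$; as $\mu^{+} \rhd \lambda$ strictly, this places $m_{\lambda}\mathfrak{d}^{(s)}_{d,t}$ in $\check{\mathscr{H}}^{\lambda}$.

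For the $\mathbf{L}$ generators, set $k := \overline{\lambda}_{(s'+1)} + 1$. The triangular action of the Jucys--Murphy element $L_k$ on the Murphy basis provides
\[
m_{\lambda} L_k \;\equiv\; \mathrm{res}_{\mathfrak{t}^{\lambda}}(k)\, m_{\lambda} \pmod{\check{\mathscr{H}}^{\lambda}},
\]
which is essentially the statement that in the Specht module $S^{\lambda}$ the generator $m_{\lambda} + \check{\mathscr{H}}^{\lambda}$ is an $L_k$-eigenvector with eigenvalue $\mathrm{res}_{\mathfrak{t}^{\lambda}}(k)$; this follows because $\mathfrak{t}^{\lambda}$ is the dominance-maximum standard tableau of shape $\lambda$, so no same-shape off-diagonal terms can appear in the Murphy expansion at $(\mathfrak{t}^{\lambda},\mathfrak{t}^{\lambda})$. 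Since $k$ occupies the node $(1,1,s'+1)$ of $\mathfrak{t}^{\lambda}$, whose residue is $q^{1-1} Q_{s'+1} = Q_{s'+1}$, subtracting gives $m_{\lambda}\mathfrak{l}^{(s')} = m_{\lambda}(L_k - Q_{s'+1}) \in \check{\mathscr{H}}^{\lambda}$ as required.

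The main obstacle is the $\mathbf{D}$ case: one must carry out (or cite) the Dipper--James distinguished-coset-sum expansion within the type-$\mathrm{A}$ subalgebra and then transport it to the Ariki-Koike cellular picture via compatibility of the two Murphy bases. In contrast, the $\mathbf{L}$ case is a direct residue-eigenvalue check once the triangular Jucys--Murphy action is in hand.
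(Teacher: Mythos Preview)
Your proposal is correct. For the $\mathbf{D}$ generators you take essentially the same route as the paper: both arguments rest on the type-$\mathrm{A}$ coset-sum identity $x_\lambda\,\mathfrak{d}^{(s)}_{d,t} = h\,x_\mu$ (with $\mu = \lambda\cdot\mathfrak{d}^{(s)}_{d,t}$), followed by the observation that $u^+_\lambda = u^+_\mu$ because $\mathfrak{d}^{(s)}_{d,t}$ moves nodes only within component $s$, so that $m_\lambda\,\mathfrak{d}^{(s)}_{d,t} = h\,m_\mu$ with $\mu \rhd \lambda$. Your phrasing via the $\alpha$-flattening and ``compatibility of cellular structures'' is slightly more roundabout than the paper's direct citation of \cite[4.6]{Mathas1}, but the content is the same.

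For the $\mathbf{L}$ generators your argument is genuinely different. The paper again produces an explicit dominating multicomposition $\nu = \lambda\cdot\mathfrak{l}^{(s')}$ and shows $m_\lambda\,\mathfrak{l}^{(s')} = h'\,m_\nu$ by checking $u^+_\lambda\,\mathfrak{l}^{(s')} = u^+_\nu$ and $x_\lambda = h'\,x_\nu$. You instead invoke the Jucys--Murphy eigenvalue: since $\mathfrak{t}^\lambda$ is dominance-maximal among standard $\lambda$-tableaux, the triangular action of $L_k$ on the Murphy basis yields $m_\lambda L_k \equiv \mathrm{res}_{\mathfrak{t}^\lambda}(k)\,m_\lambda \pmod{\check{\mathscr{H}}^\lambda}$, and with $k = \overline{\lambda}_{(s'+1)}+1$ the residue is exactly $Q_{s'+1}$, so $m_\lambda(L_k - Q_{s'+1}) \in \check{\mathscr{H}}^\lambda$ without ever naming a dominating shape. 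This is shorter and more conceptual. The paper's approach, on the other hand, has the virtue of uniformity---both families of generators are handled by the same ``exhibit $h\,m_\nu$'' template---and it makes visible the specific multicomposition $\lambda\cdot\mathfrak{l}^{(s')}$, which reappears later in the paper when describing how $\mathfrak{l}^{(s')}$ acts on semistandard homomorphisms.
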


\begin{proof}
Consider $m_\lambda\mathfrak{d}^{(s)}_{d,t}$ for some $(d,t,s)\in\mathrm{def}(\mathfrak{d})$ and observe that $\mathfrak{d}^{(s)}_{d,t}$ and $u^+_\lambda$ commute with one another, since any particular element of $\mathbf{D}$ permutes only entries within a given component. We then have 
\begin{displaymath}
m_\lambda\mathfrak{d}^{(s)}_{d,t}=\left(x_\lambda\mathfrak{d}^{(s)}_{d,t}\right)u^+_\lambda.
\end{displaymath}

Let $\nu=\lambda\cdot\mathfrak{d}^{(s)}_{d,t}$ and let $\mathtt{A}$ be the row semistandard $\lambda$-tableau of type $\nu$ derived from $\mathtt{T}^\lambda$ by changing the first $t$ entries in row $d+1$ of component $s$ from $(d+1,s)$ to $(d,s)$. Then
\begin{displaymath}
u^+_\lambda=u^+_\nu\phantom{iiii}\text{and}\phantom{iiii}x_\lambda\mathfrak{d}^{(s)}_{d,t}=x_\lambda\sum_{\stackrel{x\in\mathscr{D}_\lambda}{\nu(\mathfrak{t}^{\lambda }x)}} T_x
\end{displaymath} 
which, viewing $\lambda$ and $\nu$ as multicompositions, in turn implies that 
\begin{displaymath}
m_\lambda\mathfrak{d}^{(s)}_{d,t}=\left(x_\lambda\sum_{\stackrel{x\in\mathscr{D}_\lambda}{\nu(\mathfrak{t}^\lambda x)=\mathtt{A}}}T_x\right)u^+_\lambda =hx_\nu u^+_\nu=h m_\nu
\end{displaymath}
for some $h\in\mathscr{H}$.  Note that the second equality comes from \cite[4.6]{Mathas1} and our ability to regard $\lambda$ as a composition. Given that by definition $\nu$ dominates $\lambda$, we then have $m_\lambda\mathfrak{d}^{(s)}_{d,t}\in M^\lambda\cap \check{\mathscr{H}}^\lambda$.

Consider now $m_\lambda \mathfrak{l}^{(s')}$ for $s'\in\mathrm{def}(\lambda,\mathfrak{l})$; we proceed in a similar manner as in the previous case. If we let $\nu=\lambda\cdot\mathfrak{l}^{(s')}$ and let $\mathtt{A}'$ be the row semistandard $\lambda$ tableau of type $\nu$ derived from $\mathtt{T}^\lambda$ by changing the entry in the first node of the first row of component $s'+1$ from $(1,s'+1)$ to $(\rho_{s'}(\lambda)+1,s')$, then 
\begin{displaymath}
x_\lambda\sum_{\stackrel{x\in\mathscr{D}_\lambda}{\nu(\mathfrak{t}^\lambda x)=\mathtt{A}'}}T_x=h'x_\nu
\end{displaymath}
for some $h'\in\mathscr{H}$. Since $u^+_\lambda\mathfrak{l}^{(s')}=u^+_\nu$ and $\lambda\lhd\nu$ we then have $m_\lambda\mathfrak{l}^{(s')}\in M^\lambda\cap\check{\mathscr{H}}^\lambda$ as required.  
\end{proof}

Showing that the inclusion holds in the opposite direction is a far more involved process.  The focus of this part of the proof concerns the following elements of $\mathscr{H}$: If $\nu$ and $\lambda$ are multipartitions of $n$, and if $\mathtt{S}\in\mathcal{T}_0(\nu,\lambda)$ and $\mathfrak{t}\in\mathrm{Std}(\nu)$, the element $m_{\mathtt{S}\mathfrak{t}}$ of $\mathscr{H}$ is defined by 
\begin{displaymath}
m_{\mathtt{S}\mathfrak{t}}=\sum_{\stackrel{\mathfrak{s}\in\mathrm{Std}(\nu)}{\lambda(\mathfrak{s})=\mathtt{S}}}m_{\mathfrak{s}\mathfrak{t}}.
\end{displaymath}

The significance of these elements is that if we take $\nu$ to range over the multipartitions of $n$, they form an $\mathbb{F}$-basis of $M^\lambda$. We then have

\begin{lem}[{\cite[Theorem 4.5]{Mathas2}}]\label{secondy}
The right ideal $M^{\lambda}\cap\check{\mathscr{H}}^{\lambda}$ has a basis
\begin{displaymath}
\left\{m_{\mathtt{S}\mathfrak{t}}:\mathtt{S}\in\mathcal{T}_0(\nu,\lambda),\mathfrak{t}\in\mathrm{Std}(\nu)\text{ for }\nu\vdash n\text{ with }\lambda\lhd\nu\right\},
\end{displaymath}
\end{lem}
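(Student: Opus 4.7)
The plan is to reduce the statement to the semistandard basis theorem for $M^\lambda$, which I would quote as the main input: the elements $m_{\mathtt{S}\mathfrak{t}}$ ranging over $\nu\vdash n$ with $\nu\unrhd\lambda$, $\mathtt{S}\in\mathcal{T}_0(\nu,\lambda)$ and $\mathfrak{t}\in\mathrm{Std}(\nu)$ form an $\mathbb{F}$-basis of $M^\lambda$.  Indeed the lemma itself is literally lifted as \cite[Theorem 4.5]{Mathas2}, so in practice I would simply quote it; what follows is merely a sketch of the argument one would run were a direct proof required.

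First I would split this basis according to whether $\nu=\lambda$ or $\nu\rhd\lambda$.  The $\nu=\lambda$ piece collapses: since $\mathcal{T}_0(\lambda,\lambda)=\{\mathtt{T}^\lambda\}$, and since unpacking the definition of $\lambda(\mathfrak{s})$ shows that the only $\mathfrak{s}\in\mathrm{Std}(\lambda)$ with $\lambda(\mathfrak{s})=\mathtt{T}^\lambda$ is $\mathfrak{t}^\lambda$ itself, these elements reduce to $m_{\mathfrak{t}^\lambda\mathfrak{t}}$ for $\mathfrak{t}\in\mathrm{Std}(\lambda)$; under the natural projection $\pi_\lambda:M^\lambda\to S^\lambda$ they map bijectively onto the standard basis $\{m_\mathfrak{t}\}$ of $S^\lambda$ described earlier.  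The $\nu\rhd\lambda$ piece, on the other hand, lands in $\check{\mathscr{H}}^\lambda$ directly from the definitions: each $m_{\mathtt{S}\mathfrak{t}}=\sum_{\lambda(\mathfrak{s})=\mathtt{S}}m_{\mathfrak{s}\mathfrak{t}}$ is a sum of cellular basis elements indexed by pairs of standard $\nu$-tableaux with $\lambda\lhd\nu$, and so lies in $M^\lambda\cap\check{\mathscr{H}}^\lambda$.

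Combining the two observations, the $\nu=\lambda$ portion of the basis projects isomorphically onto $S^\lambda\cong M^\lambda/(M^\lambda\cap\check{\mathscr{H}}^\lambda)$, while the $\nu\rhd\lambda$ portion is a linearly independent subset of $M^\lambda\cap\check{\mathscr{H}}^\lambda$; a dimension count then forces the latter to be a basis of $M^\lambda\cap\check{\mathscr{H}}^\lambda$.  The real work behind this plan lies not in this final bookkeeping but in the semistandard basis theorem itself, whose proof for the Ariki-Koike algebra involves a non-trivial triangular change of basis between the $m_{\mathtt{S}\mathfrak{t}}$ and the cellular basis $\{m_{\mathfrak{s}\mathfrak{t}}\}$; quoting Mathas is precisely what sidesteps this technical step.
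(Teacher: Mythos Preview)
Your proposal is correct and matches the paper's approach exactly: the paper does not prove this lemma at all but simply cites it as \cite[Theorem 4.5]{Mathas2}, and you rightly observe that quoting Mathas is the appropriate move. Your additional sketch---splitting the semistandard basis of $M^\lambda$ into the $\nu=\lambda$ and $\nu\rhd\lambda$ pieces and running a dimension count against $S^\lambda$---is sound and goes beyond what the paper itself records.
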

Our proof now becomes a matter of showing that each element of this basis can be expressed in the form 
\begin{equation}\label{point}
m_{\mathtt{S}\mathfrak{t}^\nu}=\sum_{(d,t,s)\in\mathrm{def}(\lambda, \mathfrak{d})}\gamma ^{(s)}_{d,t}m_\lambda\mathfrak{d}^{(s)}_{d,t}h^{(s)}_{d,t}+\sum_{s'\in\mathrm{def}(\lambda, \mathfrak{l})}\gamma^{(s')}m_\lambda\mathfrak{l}^{(s')}h^{(s')}
\end{equation}
for some $\gamma^{(s)}_{d,t},\gamma^{(s')}\in\mathbb{F}$ and $h^{(s)}_{d,t},h^{(s')}\in\mathscr{H}$.  

 In order to do this, we seek to better describe the various appropriate $m_{\mathtt{S}\mathfrak{t}}$ elements. Specifically, we want a method of expressing them in the form of $m_\lambda h$ for elements $h$ of $\mathscr{H}$. 
As a first step towards this goal we provide the following lemma.
\begin{lem}\label{Commutation}
Suppose that $w\in\mathfrak{S}_n$ is such that it stabilizes the components of $\mathfrak{t}^\nu$.  Then $T_wu^+_\nu=u^+_\nu T_w$.
\end{lem}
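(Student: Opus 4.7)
The plan is to reduce to the case of a single simple transposition and then invoke Proposition \ref{Jucys}(2).  Since $w$ stabilizes the components of $\mathfrak{t}^\nu$, it lies in the Young subgroup $\mathfrak{S}_{\nu^{(1)}}\times\cdots\times\mathfrak{S}_{\nu^{(r)}}$, where each factor is understood as acting on the consecutive block of entries of the corresponding component of $\mathfrak{t}^\nu$.  A standard fact from Coxeter theory tells us that any reduced expression of such a $w$ uses only simple transpositions $s_j$ for which $j$ and $j+1$ lie in the same component; in other words, $\bar{\nu}_{(s)} < j < \bar{\nu}_{(s+1)}$ for some $1 \leq s \leq r$.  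So by the definition $T_w = T_{j_1}\cdots T_{j_k}$ it suffices to prove the commutation relation $T_j u^+_\nu = u^+_\nu T_j$ for each such $j$.

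Fix such a simple transposition $s_j$ and recall the factorization
\begin{displaymath}
u^+_\nu = \prod_{s'=2}^r \prod_{i=1}^{\bar{\nu}_{(s')}} (L_i - Q_{s'}).
\end{displaymath}
By Proposition \ref{Jucys}(2), the inner factor $\prod_{i=1}^{\bar{\nu}_{(s')}}(L_i - Q_{s'})$ commutes with $T_j$ provided $j \neq \bar{\nu}_{(s')}$.  So the task reduces to checking that the index $j$ associated with $s_j$ is never equal to any of the "boundary" values $\bar{\nu}_{(s')}$ for $s' = 2,\ldots, r$.

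This last step is where all the content lies, but it is a quick index check.  Since $s_j$ keeps the components intact, we have $\bar{\nu}_{(s)} < j < \bar{\nu}_{(s+1)}$ for the particular $s$ indexing the component containing $j$.  If $s' \leq s$ then $\bar{\nu}_{(s')} \leq \bar{\nu}_{(s)} < j$, while if $s' \geq s+1$ then $\bar{\nu}_{(s')} \geq \bar{\nu}_{(s+1)} > j$; in either case $j \neq \bar{\nu}_{(s')}$.  Hence $T_j$ commutes with every inner factor of $u^+_\nu$, so it commutes with $u^+_\nu$ itself, and multiplying out the reduced expression for $w$ gives $T_w u^+_\nu = u^+_\nu T_w$.

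The main obstacle is really just the indexing; the commutation is almost immediate once one translates "stabilizes the components" into the statement that each simple transposition in a reduced expression for $w$ stays strictly between consecutive boundary indices $\bar{\nu}_{(s)}$.  No deeper structural results are needed beyond Proposition \ref{Jucys}(2) and the basic fact about reduced expressions in Young subgroups.
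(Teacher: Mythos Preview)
Your proof is correct and follows essentially the same approach as the paper: both observe that $w$ lies in the Young subgroup $\mathfrak{S}_{\nu^{(1)}}\times\cdots\times\mathfrak{S}_{\nu^{(r)}}$, hence any reduced expression avoids the boundary generators $s_{\overline{\nu}_{(i)}}$, and then apply Proposition~\ref{Jucys}(2) factor by factor. The paper phrases the first step as a decomposition $w=w_1\cdots w_r$ into commuting pieces, while you go straight to the simple transpositions, but the substance is identical.
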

\begin{proof} If $w$ stabilizes the components of $\mathfrak{t}^\nu$, then $w$ can be expressed as a series of disjoint cycles $w=w_1w_2\cdots w_r$ where, if $\mathfrak{S}_{\nu^{(i)}}$ is the symmetric group acting on
\begin{displaymath}
\overline{\nu}_{(i)}+1,\overline{\nu}_{(i)}+2,\ldots,\overline{\nu}_{(i+1)},
\end{displaymath}
then $w_i\in\mathfrak{S}_{\nu^{(i)}}$.  

Hence each $w_i$ can be expressed in terms of generators 
\begin{displaymath}s_{\overline{\nu}_{(i)}+1},s_{\overline{\nu}_{(i)}+2}, \ldots, s_{\overline{\nu}_{(i+1)-1}}.\end{displaymath}
which means that $w$ does not involve $s_{\overline{\nu}_{(i)}}$ for any $1\leq i\leq r$.  Hence $T_w$ maybe expressed as $T_{w_1}T_{w_2}\cdots T_{w_r}$, each term commuting with $u^+_\nu$ by Proposition \ref{Jucys} and the definition of $u^+_\nu$.  
\end{proof}

 Next, we  introduce a little more notation, mimicking that appearing in \cite{Lyle2}, which will prove useful both in our next result and throughout this paper:

\begin{itemize}\item  Let $\nu$ and $\lambda$ be compositions and let  $\mathtt{S}$  be a $\nu$-tableau of type $\lambda$.  If $(i,j)$ is a pair of integers with $1\leq j \leq r$ and $1\leq i\leq \rho_j(\lambda)$, and $(k,l)$ is a pair of integers with $1\leq l \leq r$ and $1\leq k\leq \rho_l(\nu)$, then we define $\mathtt{S}^{(i,j)}_{(k,l)}$ to be the number of entries appearing in the $k$-th row of the $l$-th component of $\mathtt{S}$ which are equal to $(i,j)$.
\item With the previous item of notation in place, let $\Gamma_{(x,y)}$   be the sequence
\begin{align*}
&\mathtt{S}_{(x,y)}^{(x.y)},\mathtt{S}_{(x,y)}^{(x+1,y)},\ldots,\mathtt{S}^{(\rho_y(\lambda),y)}_{(x,y)},\\
&\mathtt{S}^{(1,y+1)}_{(x,y)},\mathtt{S}^{(2,y+1)}_{(x,y)},\ldots,\mathtt{S}^{(\rho_{y+1}(\lambda),y+1)}_{(x,y)},\\
&\vdots\\
&\mathtt{S}^{(1,r)}_{(x,y)},\mathtt{S}^{(2,r)}_{(x,y)},\ldots,\mathtt{S}^{(\rho_r(\lambda),r)}_{(x,y)}.
\end{align*} 
\item  Let $\mathtt{S}$ be a $\nu$-tableau of type $\lambda$ and let $\mathfrak{t}_{\mathtt{S}}$ be the row-standard $\lambda$-tableau in which $i$ occupies a node in row $u$ of component $v$ if the place occupied by $i$ in $\mathfrak{t}^{\nu}$ is occupied by $(u,v)$ in $\mathtt{S}$.  If $\mathfrak{t}_{\mathtt{S}}=\mathfrak{t}^{\lambda}\cdot w$, we set $T_{\mathtt{S}}=T_w$. 
\end{itemize} 

\begin{lem}\label{Com1}
Suppose that $\lambda$ and $\nu$ are multipartitions of $n$ and that $\mathtt{S}\in\mathcal{T}_0(\nu,\lambda)$.  Then
 \begin{displaymath}
 m_{\mathtt{S}\mathfrak{t}^{\nu}}=x_{\lambda}T_{\mathtt{S}}u^+_{\nu}\prod_{x,y\geq 1}\mathrm{C}\left(\overline{\nu}_{(x-1,y)}:\Gamma_{(x,y)} \right)
 \end{displaymath}
\end{lem}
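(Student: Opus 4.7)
The plan is to reduce the multipartition identity to a classical type A statement via the $\alpha$-stacking map, then recover the factor $u^+_\nu$ using the commutation relations of Lemma~\ref{Commutation}. Unwinding the definition, with $d(\mathfrak{t}^\nu)=1$ and $m_\nu=x_\nu u^+_\nu$, I would rewrite
$$m_{\mathtt{S}\mathfrak{t}^\nu}=\left(\sum_{\substack{\mathfrak{s}\in\mathrm{Std}(\nu)\\ \lambda(\mathfrak{s})=\mathtt{S}}} T_{d(\mathfrak{s})^{-1}}\right)x_\nu u^+_\nu.$$
The preparatory observation is that, because $\mathtt{S}$ is semistandard, a $\nu$-tableau $\mathfrak{s}$ with $\lambda(\mathfrak{s})=\mathtt{S}$ is standard if and only if it is row-standard: in any column of a component of $\nu$ the labels of $\mathtt{S}$ are strictly $\preceq$-increasing, and two positions bearing labels $(i,j)\prec(i',j')$ receive entries from rows $(i,j)$ and $(i',j')$ of $\mathfrak{t}^\lambda$, which occupy disjoint integer intervals with the former entirely below the latter, so column-strictness is automatic. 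Via $\alpha$, the index set above therefore bijects with the row-standard $\alpha(\nu)$-tableaux $\tilde{\mathfrak{s}}$ satisfying $\alpha(\lambda)(\tilde{\mathfrak{s}})=\alpha(\mathtt{S})$, and $d(\mathfrak{s})^{-1}=d(\tilde{\mathfrak{s}})^{-1}$ along this bijection.

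Inside the subalgebra of $\mathscr{H}$ generated by $T_1,\ldots,T_{n-1}$ (the Iwahori-Hecke algebra of type A) I would then invoke the classical identity
$$\left(\sum_{\tilde{\mathfrak{s}}} T_{d(\tilde{\mathfrak{s}})^{-1}}\right)x_{\alpha(\nu)}=x_{\alpha(\lambda)}\,T_{\alpha(\mathtt{S})}\prod_{x,y\geq 1}\mathrm{C}\left(\overline{\nu}_{(x-1,y)};\Gamma_{(x,y)}\right),$$
with the sum over row-standard $\alpha(\nu)$-tableaux of type $\alpha(\lambda)$ having the content prescribed by $\alpha(\mathtt{S})$. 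This is the combinatorial fact underlying the Dipper--James--Murphy semistandard basis of the type A permutation module, for which I would cite \cite{Mathas1}; its proof decomposes each $d(\tilde{\mathfrak{s}})^{-1}$ as a product of a row-permutation of $\alpha(\lambda)$ (accounting for $x_{\alpha(\lambda)}$), the fixed transition $T_{\alpha(\mathtt{S})}$, and a distinguished coset representative permuting entries within a single row of $\alpha(\nu)$ (contributing to $\prod\mathrm{C}$). All pieces match under $\alpha$: $x_\nu=x_{\alpha(\nu)}$, $x_\lambda=x_{\alpha(\lambda)}$, $T_\mathtt{S}=T_{\alpha(\mathtt{S})}$, and each $\mathrm{C}(\overline{\nu}_{(x-1,y)};\Gamma_{(x,y)})$ is built from the same reduced expressions in either picture.

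Finally, I would right-multiply this identity by $u^+_\nu$. The left-hand side becomes $m_{\mathtt{S}\mathfrak{t}^\nu}$, while on the right each $\mathrm{C}(\overline{\nu}_{(x-1,y)};\Gamma_{(x,y)})$ is a sum of $T_w$ with $w$ permuting entries in the single row $(x,y)$ of $\mathfrak{t}^\nu$, hence stabilising every component of $\mathfrak{t}^\nu$. Lemma~\ref{Commutation} then gives $T_w u^+_\nu=u^+_\nu T_w$ for every such $w$, so the whole product $\prod_{x,y}\mathrm{C}(\cdots)$ commutes with $u^+_\nu$, and pushing $u^+_\nu$ leftward past it produces the stated formula. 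The principal obstacle is the type A identity itself, which demands careful bookkeeping of the three-factor decomposition of $d(\tilde{\mathfrak{s}})^{-1}$ and of the index matching between the multipartition and composition pictures; the $\alpha$-bijection and the $u^+_\nu$-commutation step are comparatively routine.
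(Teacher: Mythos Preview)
Your argument is correct and follows essentially the same route as the paper: reduce to type~A via the $\alpha$-stacking, invoke the known type~A identity for $\sum T^*_{d(\mathfrak{s})}x_{\alpha(\nu)}$, and then commute $u^+_\nu$ past the $\mathrm{C}$-factors using Lemma~\ref{Commutation}. The paper cites \cite[Corollary~3.7]{Lyle2} rather than \cite{Mathas1} for the type~A step, and does not spell out your standard/row-standard equivalence, but the structure of the proof is the same.
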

\begin{proof}By treating $x_\nu$ as the row stabilizer of the composition $\alpha({\nu})$ and $\mathtt{S}$ as a row-semistandard $\alpha(\nu)$ tableau of type $\alpha(\lambda)$ we can apply \cite[Corollary 3.7]{Lyle2} to yield  
\begin{align*}
m_{\mathtt{S}\mathfrak{t}^{\nu}}&=\sum_{\stackrel{x\in\mathscr{D}_\nu}{\lambda(\mathfrak{t}^\nu x)=\mathtt{S}}}T^*_xx_\nu u^+_\nu \\
&=x_\lambda T_{\mathtt{S}}\prod_{x,y\geq1} \mathrm{C}\left(\overline{\nu}_{(x-1,y)}:\Gamma_{(x,y)}\right) u^+_\nu
\end{align*}
The result then follows from the observation that the permutations indexing the terms of each $\mathrm{C}$-element permute only the entries in a given row of $\mathfrak{t}^\nu$, and hence, by Lemma \ref{Commutation}, the product of such elements commutes with $u^+_\nu$.
\end{proof}

\begin{exm}
Let $\lambda=((3,2,2), (3,1,1))$ and $\nu=((4,3,2,1),(2))$ and let $\mathtt{S}\in\mathcal{T}_0(\nu,\lambda)$ be given by 
\begin{displaymath}
\mathtt{S}=\left(\,\Yvcentermath1\young(\oneone\oneone\oneone\onetwo,\twoone\twoone\threeone,\threeone\threetwo,\twotwo)\,,\,\Yvcentermath1\young(\onetwo\onetwo)\,  \right)
\end{displaymath}
Then 
\begin{align*}m_{\mathtt{S}\mathfrak{t}^{\nu}}&= (1+T_8+T_{8,9})(1+T_6)T_{7,6,5,4}T_{11,10,9}T_{11,10}m_\nu \\&=x_\lambda T_{7,6,5,4}T_{11,10,9}T_{11,10} u^+_{\nu}(1+T_3+T_{3,2}+T_{3,2,1})(1+T_6+T_{6,5})\\
& \times (1+T_8).
\end{align*}
\end{exm}

The consequence of this lemma is that our proof that \eqref{point} holds now becomes a matter of investigating how $T_{\mathtt{S}}$ interacts with $u^+_{\nu}$, for every multipartition $\nu$ of $n$ dominating $\lambda$ and every $\mathtt{S}\in\mathcal{T}_0(\lambda,\nu)$.  This breaks down into two cases: when all  of the components of $\lambda$ are the same size as their counterparts in $\nu$, and when at least one component of $\nu$ is larger than the corresponding component of $\lambda$.  

Of these cases, the former is considerably easier and it turns out that we can largely ignore $u^+_\nu$. The second case is far more complicated and involves showing that the act of trying to commute $T_{\mathtt{S}}$ `as far as we can' through $u^+_\nu$ results in an element of the desired form.  

To distinguish these two cases, we  will informally view $\nu$ as being constructed from $\lambda$ by a process moving  nodes around the diagram $[\lambda]$.  In the former case  $[\nu] $ can be formed by moving nodes within the components of $[\lambda]$, a situation we will describe as $\nu$ being a componentwise shift of $\lambda$.  We refer to the latter case as $\nu$ as being a cross component shift of $\lambda$. 

\paragraph{Componentwise Shifts.}
For every integer $s$ with $1\leq s \leq r$, let $\lambda$ and $\nu$ be multicompositions such that $|\lambda^{(s)}|=|\nu^{(s)}|$ and $\lambda\lhd\nu$.  We may then view the $s$-th component of both multipartitions as partitions of the same size, which we denote by $n_s$. Additionally, since $\overline{\nu}_{(s)}=\overline{\lambda}_{(s)}$ and $\lambda\unlhd \nu$ we have that $\lambda^{(s)}\unlhd\nu^{(s)}$ for every value of $s$. 

What's more is that the set of entries appearing in a given component of $\mathfrak{t}^{\lambda}$ is the same  as that which appear in the same component of $\mathfrak{t}^{\nu}$.  Namely, the set of entries appearing in the $s$-th component of both tableaux is
\begin{displaymath}
\left\{\overline{\lambda}_{(s)}+1,\overline{\lambda}_{(s)}+2,\ldots,\overline{\lambda}_{(s+1)}\right\}.
\end{displaymath}

Let $\mathtt{S}\in\mathcal{T}_0(\nu,\lambda)$. Since $\mathtt{S}$ is semistandard, the permutation $w\in\mathfrak{S}_n$ associated with $T_\mathtt{S}$ permutes the entries of the initial $\lambda$-tableau component-wise; that is, the set of entries in a given component of $\mathfrak{t}^\lambda$ is the same as the set of entries in the same component of $\mathfrak{t}^{\lambda}\cdot w$.  Otherwise, we would have an entry $(u,v)$ appearing in some component $v'$ of $\mathtt{S}$ where $v<v'$ contradicting the assumption that $\mathtt{S}$ is a semistandard $\nu$ -tableau.

Let $w_s$ be the permutation in $\mathfrak{S}_X$, where $X=\{\overline{\lambda}_{(s)}+1,\overline{\lambda}_{(s)}+2,\ldots,\overline{\lambda}_{(s+1)}\}$, for which the $s$-th component of $\mathfrak{t}^\lambda\cdot w$ is the same as $\mathfrak{t}^{\lambda}\cdot w_s$, and let $T_{\mathtt{S}^{(s)}}=T_{w_s}$.  Using this notation, we have the following decomposition which goes some way to showing how $m_{\mathtt{S}\mathfrak{t}^\nu}$ can be expressed in the form $m_\lambda h$, where $h$ is a product of terms, each of which acts only on a particular component of $\lambda$.     

\begin{lem}
Let $\lambda$ and $\nu$ be multipartitions such that $\lambda \unlhd \nu$ and $|\lambda^{(i)}|=|\nu^{(i)}|$ for all $1\leq i\leq r$.  Then, whenever $\mathtt{S}\in\mathcal{T}_0(\nu,\lambda)$,  
\begin{displaymath}
T_\mathtt{S}=\prod_{s=1}^r T_{\mathtt{S}^{(s)}}
\end{displaymath} 
and
\begin{align*}
&T_{\mathtt{S}}\prod_{x,y\geq 1}\mathrm{C}\left(\overline{\nu}_{(x-1,y)}:\Gamma_{(x,y)}\right)
=\prod_{y=1}^r \left(T_{\mathtt{S}^{(y)}}\prod_{x\geq1} \mathrm{C}\left(\overline{\nu}_{(x-1,y)}:\Gamma_{(x,y)}\right)\right).
\end{align*}
\end{lem}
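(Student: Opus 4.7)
The plan is to exploit the fact that, under the hypothesis $|\lambda^{(i)}|=|\nu^{(i)}|$ for every $i$, all the elements appearing in either side of the asserted identity are built from generators drawn from non-overlapping ranges of indices, one range per component. The key preliminary observation, already isolated in the discussion preceding the lemma, is that every entry of type $(i,k)$ in $\mathtt{S}$ must lie in component $k$. This I would establish by downward induction on $k$: the equality $|\lambda^{(r)}|=|\nu^{(r)}|$ together with semistandard condition (iii) forces all type-$(i,r)$ entries into component $r$, and the argument iterates to each smaller $k$.

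For the first identity, this forces the permutation $w$ with $T_\mathtt{S}=T_w$ to decompose as $w=w_1w_2\cdots w_r$ with $w_s\in\mathfrak{S}_{X_s}$, where $X_s=\{\overline{\lambda}_{(s)}+1,\ldots,\overline{\lambda}_{(s+1)}\}$ is the set of entries in component $s$ of $\mathfrak{t}^\lambda$. Because the supports $X_s$ are pairwise disjoint and separated (so that the simple transpositions involved in distinct $w_s$'s belong to index ranges differing by at least $2$, across the boundary indices $\overline{\lambda}_{(s)}$), the concatenation of reduced expressions for the $w_s$ yields a reduced expression for $w$. Hence $T_w=T_{w_1}\cdots T_{w_r}$, which is exactly $\prod_{s=1}^{r}T_{\mathtt{S}^{(s)}}$.

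For the second identity, I would note that $T_{\mathtt{S}^{(s)}}$ is a polynomial in the generators $T_i$ with $\overline{\lambda}_{(s)}+1\leq i\leq\overline{\lambda}_{(s+1)}-1$, while $\mathrm{C}\bigl(\overline{\nu}_{(x-1,y)}:\Gamma_{(x,y)}\bigr)$ is a polynomial in the $T_i$ with $\overline{\nu}_{(x-1,y)}+1\leq i\leq\overline{\nu}_{(x,y)}-1$, an index range contained inside $[\overline{\lambda}_{(y)}+1,\,\overline{\lambda}_{(y+1)}-1]$ once we use $\overline{\lambda}_{(j)}=\overline{\nu}_{(j)}$ for every $j$. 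Any two factors drawn from distinct components $y\neq y'$ therefore involve only generators whose indices differ by at least $2$, separated by the boundary index $\overline{\lambda}_{(\max(y,y')+1)}$ which lies in neither range, so they commute by the braid relations.

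Given these commutations, it only remains to reorder the left-hand product. Within each component the relative order of the $\mathrm{C}$-terms coincides on both sides, so one simply shuffles each factor $\mathrm{C}\bigl(\overline{\nu}_{(x-1,y)}:\Gamma_{(x,y)}\bigr)$ rightward past those $T_{\mathtt{S}^{(y')}}$ and $\mathrm{C}\bigl(\overline{\nu}_{(x'-1,y')}:\Gamma_{(x',y')}\bigr)$ belonging to components $y'\neq y$ until it sits adjacent to $T_{\mathtt{S}^{(y)}}$. This produces precisely the right-hand side. The only thing requiring care is the bookkeeping of the index boundaries that separate the component-blocks; no substantive obstacle arises beyond that.
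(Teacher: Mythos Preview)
Your proposal is correct and follows essentially the same approach as the paper: decompose $w$ componentwise using the fact (already recorded just before the lemma) that entries of type $(i,k)$ must occupy component $k$, deduce $T_\mathtt{S}=\prod_s T_{\mathtt{S}^{(s)}}$, and then observe that factors belonging to distinct components commute because their generator indices lie in disjoint, separated ranges. The paper's own proof is considerably terser---it simply asserts that $w=w_1\cdots w_r$ ``as a product of disjoint cycles'' and that $T_{\mathtt{S}^{(s)}}$ commutes with $\mathrm{C}(\overline{\nu}_{(x-1,s')}:\Gamma_{(x,y)})$ for $s\neq s'$ ``by similar reasoning''---so your version merely makes explicit the index-range bookkeeping and the reducedness of the concatenated expression that the paper leaves to the reader.
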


\begin{proof}
It's clear that $w=w_1w_2\cdots w_r$ as a product of disjoint cycles, since for every value of $s$ the permutation fixes all entries in $\mathfrak{t}^\lambda$ bar those appearing in component $s$.   Then
\begin{displaymath}
T_{\mathtt{S}}=\prod_{s=1}^r T_{\mathtt{S}^{(s)}}
\end{displaymath}
By similar reasoning $T_{\mathtt{S}^{(s)}}$ commutes with 
\begin{displaymath}
\mathrm{C}\left(\overline{\nu}_{(x-1,s')}:\Gamma_{(x,y)}\right)
\end{displaymath} whenever $s\neq s'$.  This then delivers the second part of the statement.  
\end{proof}

\begin{lem}\label{Thirdy}
Let $\lambda$ and $\mu$ be multipartitions with $\lambda\lhd\nu$ and $|\lambda^{(i)}|=|\nu^{(i)}|$ for all $1\leq i\leq r$, and let $\mathtt{S}\in\mathcal{T}_0(\nu,\lambda)$.  Then $m_{\mathtt{S}\mathfrak{t}^{\nu}}\in\mathfrak{I}$.
\end{lem}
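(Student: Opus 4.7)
The plan is to reduce the statement to a componentwise application of Lyle's type-A result. The first step is to invoke Lemma \ref{Com1} to write
\[
m_{\mathtt{S}\mathfrak{t}^\nu} = x_\lambda T_\mathtt{S} u^+_\nu \prod_{x,y\geq 1}\mathrm{C}\!\left(\overline{\nu}_{(x-1,y)}:\Gamma_{(x,y)}\right),
\]
and to observe that since $|\lambda^{(i)}|=|\nu^{(i)}|$ for all $i$, we have $u^+_\lambda = u^+_\nu$. Because $\mathtt{S}$ is semistandard with matching component sizes, the permutation associated to $T_\mathtt{S}$ stabilises the components of $\mathfrak{t}^\nu$, and each $\mathrm{C}$-element is itself a sum over permutations within single rows of $\mathfrak{t}^\nu$. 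Lemma \ref{Commutation} thus lets me slide $u^+_\lambda$ to the right past $T_\mathtt{S}$ and every $\mathrm{C}$-factor, yielding $m_{\mathtt{S}\mathfrak{t}^\nu} = x_\lambda T_\mathtt{S} \prod_{x,y\geq 1}\mathrm{C}(\overline{\nu}_{(x-1,y)}:\Gamma_{(x,y)}) \, u^+_\lambda$.

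Next I would apply the preceding lemma to factor the middle expression as $\prod_{y=1}^r \bigl( T_{\mathtt{S}^{(y)}}\prod_{x\geq 1}\mathrm{C}(\overline{\nu}_{(x-1,y)}:\Gamma_{(x,y)}) \bigr)$, and note the parallel factorisation $x_\lambda = \prod_{s=1}^r x_{\lambda^{(s)}}$, where $x_{\lambda^{(s)}}$ is the row-stabiliser in the symmetric group on the entries of the $s$-th component of $\mathfrak{t}^\lambda$. Because the various $x_{\lambda^{(s)}}$ and the component-wise factors $T_{\mathtt{S}^{(y)}}\prod_x\mathrm{C}(\cdots)$ pairwise commute whenever they involve disjoint blocks of generators, the whole product reassembles as $\prod_{s=1}^r \bigl( x_{\lambda^{(s)}} T_{\mathtt{S}^{(s)}}\prod_{x\geq 1}\mathrm{C}(\overline{\nu}_{(x-1,s)}:\Gamma_{(x,s)}) \bigr)$, with each factor lying in the type-A Hecke subalgebra acting on the entries of component $s$.

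Inside each component $s$, the expression $x_{\lambda^{(s)}} T_{\mathtt{S}^{(s)}}\prod_x\mathrm{C}(\cdots)$ is precisely the object treated by \cite[Theorem 2.3]{Lyle2} for the partitions $\lambda^{(s)}\unlhd\nu^{(s)}$ (with strict dominance in at least one component, since $\lambda\lhd\nu$). For components where equality $\lambda^{(s)}=\nu^{(s)}$ holds, $\mathtt{S}^{(s)}=\mathtt{T}^{\lambda^{(s)}}$ and the factor collapses to $x_{\lambda^{(s)}}$; for the remaining components, Lyle's theorem expresses the factor as an $\mathbb{F}$-linear combination of terms $x_{\lambda^{(s)}}\mathfrak{d}^{(s)}_{d,t} h$ where $h$ lies in the Hecke subalgebra acting on component $s$. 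Reassembling the components and then reintroducing $u^+_\lambda$ on the right, and using the fact (noted in the proof of Lemma \ref{firsty}) that each $\mathfrak{d}^{(s)}_{d,t}$ commutes with $u^+_\lambda$, produces an expansion $m_{\mathtt{S}\mathfrak{t}^\nu}=\sum \gamma^{(s)}_{d,t}\, m_\lambda \mathfrak{d}^{(s)}_{d,t} h^{(s)}_{d,t} \in \mathfrak{I}$ of the required shape.

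The main obstacle I anticipate is the bookkeeping needed to justify the component-by-component passage: one must check that the shifted $\mathrm{C}$-elements $\mathrm{C}(\overline{\nu}_{(x-1,s)}:\Gamma_{(x,s)})$ and the restricted permutation $T_{\mathtt{S}^{(s)}}$ really are the images, under the obvious shift of indices, of the corresponding objects in the type-A situation for $\lambda^{(s)}$ and $\nu^{(s)}$; and dually that the element $\mathfrak{d}^{(s)}_{d,t}$, defined globally on the full multicomposition $\lambda$, coincides with Lyle's $h_{d,t}$ applied to the relevant window of indices, so that Lyle's conclusion transfers verbatim. Once this identification is made and commutativity of each $\mathfrak{d}^{(s)}_{d,t}$ with both $u^+_\lambda$ and with factors belonging to disjoint components is used throughout, the desired membership $m_{\mathtt{S}\mathfrak{t}^\nu}\in\mathfrak{I}$ follows.
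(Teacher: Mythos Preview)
Your proposal is correct and follows essentially the same route as the paper: both exploit $u^+_\lambda=u^+_\nu$ and Lemma~\ref{Commutation} to commute $u^+_\lambda$ past $T_{\mathtt{S}}$, factor $x_\lambda T_{\mathtt{S}}\prod\mathrm{C}(\cdots)$ componentwise, and then invoke Lyle's type-$A$ theorem on a component $k$ with $\lambda^{(k)}\lhd\nu^{(k)}$. The only real difference is in how Lyle's conclusion is read off---you use the right-ideal generator description (writing the $k$-th factor as $\sum x_{\lambda^{(k)}}\mathfrak{d}^{(k)}_{d,t}h$), which after reassembly and commuting $u^+_\lambda$ back yields $m_{\mathtt{S}\mathfrak{t}^\nu}\in\mathfrak{I}$ directly, whereas the paper phrases the same step via the cellular basis of $\check{\mathscr{H}}^{\lambda^{(k)}}$.
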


\begin{proof}
If $|\lambda^{(i)}|=|\nu^{(i)}|$ for all $1\leq i\leq r$, then $u^+_{\lambda}=u^+_{\nu}$.   By Lemma \ref{Com1} , we  have
  \begin{align*}
  m_{\mathtt{S}\mathfrak{t}^{\nu}}=\sum_{\stackrel{
  \mathfrak{s}\in\mathrm{Std}(\nu)}{
  \lambda(\mathfrak{s})=\mathtt{S}}}m_{\mathfrak{s}\mathfrak{t}^{\nu}}&=\left(x_{\lambda}T_{\mathtt{S}}u^+_\nu\right)\prod_{x,y\geq 1}\mathrm{C}\left(\overline{\nu}_{(x-1,s)}:\Gamma_{(x,y)} \right)
  \end{align*}
  
  We now claim that $T_\mathtt{S}$ and $u^+_\nu$ commute, but this is clear since the permutation associated with $\mathtt{S}$ only permutes entries within the individual components and not across them.   Hence
\begin{displaymath}
m_{\mathtt{S}\mathfrak{t}^\nu}=u^+_{\lambda}x_{\lambda}T_{\mathtt{S}}\prod_{x,y\geq 1}\mathrm{C}\left(\overline{\nu}^{(y)}_{(x-1,y)}:\Gamma_{(x,y)} \right).
\end{displaymath}

Suppose now that $k$ is the least value of $s$ for which $\lambda^{(k)}\lhd \nu^{(k)}$.  Then it can be easily verified that 

\begin{equation}\label{Com2}
m_{\mathtt{S}\mathfrak{t}^{\nu}}=u^+_\lambda x_\lambda\prod_{s=k}^r \left(T_{\mathtt{S}^{(s)}}\prod_{i=1}^{\rho_s(\nu)}\mathrm{C}\left( \overline{\nu}_{(i-1,s)}:\Gamma_{(i,s)}\right) \right).
\end{equation}

 Now let $\mathfrak{S}_{\lambda^{(i)}}$ be the Young subgroup corresponding to the $i$-th component of $\lambda$ and define $$x_{{\lambda}^{(i)}}=\sum_{w\in\mathfrak{S}_{\lambda^{(i)}}}T_w$$ We may then write 
\begin{equation}\label{Com3}
x_\lambda=\prod_{i=1}^r x_{\lambda^{(i)}}
\end{equation}

Moreover,  $x_{\lambda^{(i)}}$ and $x_{\lambda^{(j)}}$ commute with one another whenever $i\neq j$ and so we may rearrange the terms of \eqref{Com3} however we please.  In particular, for $1\leq k\leq r$, let $x_{\lambda\setminus\lambda^{(k)}}$ denote the product $(x_{\lambda^{(1)}}x_{\lambda^{(2)}}\cdots x_{\lambda^{(k-1)}}x_{\lambda^{(k+1)}}\cdots x_{\lambda^{(r)}})$.  We may write $x_{\lambda}=x_{\lambda\setminus\lambda^{(k)}}x_{\lambda^{(k)}}$ and substitute this in \eqref{Com2} to yield

\begin{equation}\label{juggle}
m_{\mathtt{S}\mathfrak{t}^{\nu}}=u^+_\lambda x_{\lambda\setminus\lambda^{(k)}}\left(x_{\lambda^{(k)}}T_{\mathtt{S}^{(k)}}\prod_{i=1}^{\rho_k(\nu)}\mathrm{C}\left(\overline{\nu}_{(i-1,k)}:\Gamma_{(i-1,k)}\right) \right)h
\end{equation}  
where $h$ consists of the remaining terms of the product appearing in \eqref{Com2}.  The significance of the expression in brackets is that we may apply the results of \cite{Lyle2} almost directly. In particular, since
\begin{displaymath}
x_{\lambda^{(k)}}T_{\mathtt{S}^{(k)}}\prod_{i=1}^{\rho_k(\nu)}\mathrm{C}\left(\overline{\nu}_{(i-1,k)}:\Gamma_{(i-1,k)}\right)=m_{\mathtt{S}^{(k)}\mathfrak{t}^{\nu'}} \end{displaymath}
where $\lambda^{(k)}\lhd\nu^{(k)}=\nu'$ and $\mathtt{S}^{(k)}$ is a semistandard $\nu'$-tableaux of type $\lambda^{(k)}$, \cite[Theorem 2]{Lyle2} tells us that 
\begin{displaymath}
x_{\lambda^{(k)}}T_{\mathtt{S}^{(k)}}\prod_{i=1}^{\rho_k(\nu)}\mathrm{C}\left(\overline{\nu}_{(i-1,k)}:\Gamma_{(i-1,k)}\right)\in M^{\lambda^{(k)}}\cap\check{\mathscr{H}}^{\lambda^{(k)}}.
\end{displaymath}

We may then write the left hand side as a linear combination of elements of $\mathscr{H}$ of the form 
\begin{displaymath}
T^*_{d(\mathfrak{s})}x_{\mu} T_{d(\mathfrak{t})}
\end{displaymath}
where $\mu$ ranges over the partitions of $|\lambda^{(k)}|$ which dominate $\lambda^{(k)}$ and $\mathfrak{s}$ and $\mathfrak{t}$ are standard $\mu$-tableaux with entries taken from the set $\{\overline{\lambda}_{(k)}+1,\ldots, \overline{\lambda}_{(k+1)}\}$.

 Since $d(\mathfrak{s})$ permutes only these entries, substituting these terms into \eqref{juggle} gives us that $m_{\mathtt{S}\mathfrak{t}^\nu}$  is a linear combination of the terms
\begin{displaymath}
T^*_{d(\mathfrak{s})}u^+_\lambda x_{\lambda\setminus\lambda^{(k)}}x_\mu T_{d(\mathfrak{t})}h=T^*_{d(\mathfrak{s})}m_{\mu'}T_{d(\mathfrak{t})}h
\end{displaymath}
where $\mu'=(\lambda^{(1)},\lambda^{(2)},\ldots,\lambda^{(k-1)},\mu,\lambda^{(k+1)},\lambda^{(k+2)},\ldots,\lambda^{(r)})$.  It's clear that $\mu'$ is a multipartition and dominates $\lambda$, and that $\mathfrak{t}^{\mu'}\cdot d(\mathfrak{s})$ and $\mathfrak{t}^{\mu'}\cdot d(\mathfrak{t})$ are both standard $\mu'$-tableaux.  Hence $m_{\mathtt{S}\mathfrak{t}^\nu}\in\check{\mathscr{H}}^\lambda$ as required. 
 \end{proof}

\paragraph{Cross Component Shifts}
As was the case for componentwise shifts, we wish to express
\begin{displaymath}
m_{\mathtt{S}\mathfrak{t}^\nu}=\left(x_\lambda T_\mathtt{S}\prod_{x,y\geq 1}\mathrm{C}\left(\overline{\nu}_{(x-1,y)}:\Gamma_{(x,y)} \right)\right)u^+_\nu
\end{displaymath}
as an element of $\mathfrak{I}$ whenever $\lambda\lhd\nu$ and $\mathtt{S}\in\mathcal{T}_0(\nu,\lambda)$.  In this case we're interested in the situation where $|\lambda^{(i)}|<|\nu^{(i)}|$ for at least one $1\leq i\leq r-1$.  As before,
\begin{displaymath}
\prod_{x,y\geq 1}\mathrm{C}\left(\overline{\nu}_{(x-1,y)}:\Gamma_{(x,y)} \right)u^+_\nu=u^+_\nu\prod_{x,y\geq 1}\mathrm{C}\left(\overline{\nu}_{(x-1,y)}:\Gamma_{(x,y)} \right).
\end{displaymath}
However, it's not true in general that $T_\mathtt{S}u^+_\nu=u^+_\nu T_\mathtt{S}$ in this case, and so we will concentrate on how $T_\mathtt{S}$ interacts with $u^+_\nu$. 
As will be seen, once every term of the form $\mathrm{C}\left(\overline{\nu}_{(x-1,y)}:\Gamma_{(x,y)} \right)$ has been commuted past $u^+_\nu$, as in Lemma \ref{Com1}, they play no further part in the following proofs.  As such, it will be sufficient to show that $x_\lambda T_\mathtt{S}u^+_\nu\in\mathfrak{I}$. 

In order to proceed, we must introduce a particular family of $\lambda$-tableaux. As before, let $\lambda$ and $\nu$ be multipartitions with $\lambda\lhd\nu$, and let $\mathtt{S}\in\mathcal{T}_0(\nu,\lambda)$.
The tableaux in question will be defined inductively as follows:\begin{enumerate}
\item Set $\mathfrak{t}_{\mathtt{S}(0)}=\mathfrak{t}_{\mathtt{S}}$;
\item For $1\leq i <r-1$ define $\mathfrak{t}_{\mathtt{S}(i)}$ to be the $\lambda$-tableau obtained from $\mathfrak{t}_{\mathtt{S}(i-1)}$  by setting the entries in the first $i$ components to be the same as in $\mathfrak{t}^{\lambda}$.  Note that the first $i$ components of $\mathfrak{t}_{\mathtt{S}}$ contain only entries from the set $\{1,2,\ldots,\overline{\nu}_{(i+1)} \}$ and so there are then $\tau_i=\sum_{j=1}^i(|\nu^{(i)}|-|\lambda^{(i)}|)$ entries from the first $i$ components of $\mathfrak{t}^{\nu}$ appearing in the remaining $r-i$ components of our tableau.  Labelling these $\{x_1,x_2,\ldots,x_{\tau_i}\}$, such that $x_i<x_j$ whenever $i<j$,  we replace these elements using the rule
\begin{displaymath}
x_k\mapsto \overline{\lambda}_{(i)}+k
\end{displaymath} 
\end{enumerate}

\begin{exm}
If $\lambda=((2,1),(2,2),(2,2,1))$ and $\nu=((3,2,1),(3,1),(2))$, and $\mathtt{S}\in\mathcal{T}_0(\nu,\lambda)$ is given by
\begin{displaymath}
\mathtt{S}=\left( 
\Yvcentermath1\young(\oneone\oneone\twothree,\twoone\twotwo,\onethree)\,,\,
\Yvcentermath1\young(\onetwo\onetwo\threethree,\twotwo)\,,\,
\Yvcentermath1\young(\onethree\twothree)\,
\right)
\end{displaymath}
Then
\begin{align*}
&\mathfrak{t}_{\mathtt{S}(0)}=\left(
\Yvcentermath1\young(12,4)\,,\,\Yvcentermath1\young(78,5\ten)\,,\,\Yvcentermath1\young(6\eleven,3\twelve,9)\,\right) &&\mathfrak{t}_{\mathtt{S}(1)}=\left(\,\Yvcentermath1\young(12,3)\,,\,\Yvcentermath1\young(78,5\ten)\,,\,\Yvcentermath1\young(6\eleven,4\twelve,9)\,
\right)\\
&\mathfrak{t}_{\mathtt{S}(2)}=\left(\,\Yvcentermath1\young(12,3)\,,\,\Yvcentermath1\young(45,67)\,,\,\Yvcentermath1\young(9\eleven,8\twelve,\ten)\,\right).
\end{align*}
\end{exm}

With this definition in mind, if $w\in\mathfrak{S}_n$ is the unique permutation such that $\mathfrak{t}_{\mathtt{S}(i)}=\mathfrak{t}^{\lambda}\cdot w$, set $T_{\mathtt{S}(i)}=T_w$.  In order to better describe how $T_{\mathtt{S}}$ and the elements of $\mathscr{H}$ just defined interact with $u^+_{\mu}$, we also introduce the following notation.

\begin{itemize}
\item For any given multicomposition $\alpha$ of $n$, let $\overline{u}^+_{\alpha^{(i)}}$ refer to that aspect of $u^+_{\alpha}$ corresponding to the first $i$ components of $\alpha$:
\begin{displaymath}
\overline{u}^+_{\alpha^{(i)}}=\prod_{j=2}^{i+1}\prod_{k=1}^{|\alpha^{(1)}|+\cdots+|\alpha^{(j-1)}|}(L_k-Q_j)
\end{displaymath}
\item Similarly, we specify $\underline{u}^+_{\alpha^{(i)}}$ as those terms of $u^+_{\alpha}$ corresponding to the last $(r-1)-i$ components:
\begin{displaymath}
\underline{u}^+_{\alpha^{(i)}}=\prod_{j=i+1}^r\prod_{k=1}^{|\alpha^{(1)}|+\cdots+|\alpha^{(j-1)}|}(L_k-Q_j).
\end{displaymath}
\item
Finally, if $\gamma$ is another multicomposition of $n$, with $\alpha\unlhd\gamma$, let $u^+_{\gamma^{(i)}\setminus\alpha^{(i)}}$ signify the `difference' between $u^+_{\gamma}$ and $u^+_{\alpha}$ at the $i$-th component:
\begin{displaymath}
u^+_{\gamma^{(i)}\setminus\alpha^{(i)}}=\prod_{j=\overline{\alpha}^{(i)}+1}^{\overline{\gamma}^{(i)}}(L_j-Q_{i+1}).
\end{displaymath}
Note that when $\overline{\gamma}_{(i)}=\overline{\alpha}_{(i)}$ we set $u^+_{\gamma^{(i)}\setminus\alpha^{(i)}}=1$.
\end{itemize}
\begin{exm}
Suppose that $\alpha=((2,1),(2,2),(2,2,1))$.  Then 
\begin{align*}
\overline{u}^+_{\alpha^{(2)}}&=(L_1-Q_2)(L_2-Q_2)(L_3-Q_2)\\ &\times(L_1-Q_3)(L_2-Q_3)(L_3-Q_3)(L_4-Q_3)(L_5-Q_3)(L_6-Q_3)(L_7-Q_3)\\
\underline{u}^+_{\alpha^{(2)}}&=(L_1-Q_3)(L_2-Q_3)(L_3-Q_3)(L_4-Q_3)(L_5-Q_3)(L_6-Q_3)(L_7-Q_3),
\end{align*}
and, if $\gamma=((3,2,1),(3,1),(2))$, then
\begin{displaymath}
u^+_{\gamma^{(2)}\setminus\alpha^{(2)}}=(L_8-Q_3)(L_9-Q_3)(L_{10}-Q_3).
\end{displaymath}
\end{exm}

The following lemma expresses how the terms of $T_{\mathtt{S}}$ in a sense filter out the extraneous terms of $u^+_{\nu}$, to leave something of the form $u^+_\lambda h$ for some $\mathscr{H}$.  Subsequent results then establish that whatever is to the right of $u^+_\lambda$ after this filtering procedure involves a linear combination of the elements of $\mathbf{D}$ and $\mathbf{L}$ (multiplied on the right by elements of $h\in\mathscr{H}$).

\begin{lem}\label{form1}
Let $\lambda$ and $\nu$ be multipartitions of $n$ in $r$ parts with $\lambda\lhd\nu$.  If $\mathtt{S}$ is a semistandard $\nu$-tableau of type $\lambda$, then
 \begin{displaymath}
 T_{\mathtt{S}}u^+_{\nu}=\left(T_{\mathtt{S}(i)}\overline{u}^+_{\lambda^{(i)}}\underline{u}^+_{\nu^{(i+1)}}\right)u^+_{\nu^{(i)}\setminus\lambda^{(i)}}h_i
 \end{displaymath}
 for some $h_i\in\mathscr{H}$ and every $1\leq i\leq r-1$. 
 \end{lem}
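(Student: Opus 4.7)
The plan is to argue by induction on $i$, with the convention $i=0$ providing a tautological base case: taking $\mathfrak{t}_{\mathtt{S}(0)}=\mathfrak{t}_\mathtt{S}$ and reading $\overline{u}^+_{\lambda^{(0)}}$ and $u^+_{\nu^{(0)}\setminus\lambda^{(0)}}$ as empty products equal to $1$ while $\underline{u}^+_{\nu^{(1)}}$ coincides with $u^+_\nu$, the identity $T_\mathtt{S} u^+_\nu = T_\mathtt{S} u^+_\nu$ holds trivially with $h_0=1$.  The cases $1\leq i\leq r-1$ then follow by repeated application of the inductive step.

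For the inductive step, suppose that $T_\mathtt{S} u^+_\nu = T_{\mathtt{S}(i-1)}\,\overline{u}^+_{\lambda^{(i-1)}}\,\underline{u}^+_{\nu^{(i)}}\,u^+_{\nu^{(i-1)}\setminus\lambda^{(i-1)}}\,h_{i-1}$.  The first manoeuvre is purely algebraic.  By Proposition \ref{Jucys}(1) all Jucys-Murphy factors commute, so one may peel the $j=i+1$ block off $\underline{u}^+_{\nu^{(i)}}$ to obtain $\prod_{k=1}^{\overline{\nu}_{(i+1)}}(L_k-Q_{i+1})\cdot\underline{u}^+_{\nu^{(i+1)}}$ and then further split
\begin{displaymath}
\prod_{k=1}^{\overline{\nu}_{(i+1)}}(L_k-Q_{i+1})=\prod_{k=1}^{\overline{\lambda}_{(i+1)}}(L_k-Q_{i+1})\cdot u^+_{\nu^{(i)}\setminus\lambda^{(i)}}.
\end{displaymath}
Absorbing the first of these into $\overline{u}^+_{\lambda^{(i-1)}}$ produces $\overline{u}^+_{\lambda^{(i)}}$, and after rearranging the commuting $L$-polynomials we arrive at
\begin{displaymath}
T_\mathtt{S} u^+_\nu=T_{\mathtt{S}(i-1)}\,\overline{u}^+_{\lambda^{(i)}}\,\underline{u}^+_{\nu^{(i+1)}}\,u^+_{\nu^{(i)}\setminus\lambda^{(i)}}\,u^+_{\nu^{(i-1)}\setminus\lambda^{(i-1)}}\,h_{i-1}.
\end{displaymath}

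The substantive step is then to replace $T_{\mathtt{S}(i-1)}$ by $T_{\mathtt{S}(i)}$ while absorbing the residual factor $u^+_{\nu^{(i-1)}\setminus\lambda^{(i-1)}}=\prod_{k=\overline{\lambda}_{(i)}+1}^{\overline{\nu}_{(i)}}(L_k-Q_i)$.  The key observation is that $\mathfrak{t}_{\mathtt{S}(i-1)}$ and $\mathfrak{t}_{\mathtt{S}(i)}$ agree on the first $i-1$ components and differ only by reshuffling entries between the $i$-th component and positions in the later components whose labels lie in $\{\overline{\lambda}_{(i)}+1,\ldots,\overline{\nu}_{(i)}\}$ — precisely the support of $u^+_{\nu^{(i-1)}\setminus\lambda^{(i-1)}}$.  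Passing the simple transpositions occurring in a reduced expression for $d(\mathfrak{t}_{\mathtt{S}(i-1)})d(\mathfrak{t}_{\mathtt{S}(i)})^{-1}$ through the Jucys-Murphy factors via Proposition \ref{Jucys}(3) and the braid-type identities relating the $T_k$ to the $L_k$, each crossing either annihilates a factor $(L_k-Q_i)$ — this happens once the relevant label has been moved into a position of $\mathfrak{t}^\nu$ at which $L_k$ acts as $Q_i$ — or generates a correction term which can be carried all the way to the right.  Collecting these corrections together with $h_{i-1}$ into a single $h_i\in\mathscr{H}$ closes the induction.

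The main obstacle is the careful bookkeeping of this absorption argument: one must verify that once every simple transposition in $d(\mathfrak{t}_{\mathtt{S}(i-1)})d(\mathfrak{t}_{\mathtt{S}(i)})^{-1}$ has been processed, the factors $(L_k-Q_i)$ have either been consumed or pushed cleanly past $T_{\mathtt{S}(i)}$ into $h_i$.  This mirrors the analogous step in \cite{Lyle2} for the Iwahori-Hecke algebra of type $A$, but has to be adapted to the cyclotomic setting where each Jucys-Murphy element is pinned to a distinguished parameter $Q_j$ determined by the component of $\mathfrak{t}^\nu$ in which its index sits.
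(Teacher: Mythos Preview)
Your inductive scaffolding is fine, and the first algebraic manoeuvre (splitting off the $j=i+1$ block of $\underline{u}^+_{\nu^{(i)}}$ and refactoring) is exactly what is needed.  The gap is in the ``substantive step''.

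You never establish that $T_{\mathtt{S}(i-1)}$ factors as $T_{\mathtt{S}(i)}$ times anything controllable; without this there is no way to peel off $T_{\mathtt{S}(i)}$ on the left.  The paper's argument is both simpler and cleaner than what you sketch: writing $w_i$ for the permutation with $\mathfrak{t}_{\mathtt{S}(i-1)}=\mathfrak{t}_{\mathtt{S}(i)}\cdot w_i$, one first shows via a length count (comparing Dyer's coreflection sets $N(w_{\mathtt{S}(i-1)})$ and $N(w_{\mathtt{S}(i)})$) that $w_{\mathtt{S}(i-1)}=w_{\mathtt{S}(i)}w_i$ is reduced, so that $T_{\mathtt{S}(i-1)}=T_{\mathtt{S}(i)}T_{w_i}$.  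One then observes that $w_i$ permutes only the entries in the interval $\{\overline{\lambda}_{(i-1)}+1,\ldots,\overline{\nu}_{(i)}\}$, hence by Proposition~\ref{Jucys}(2) the element $T_{w_i}$ commutes \emph{wholesale} with $\overline{u}^+_{\lambda^{(i-1)}}\underline{u}^+_{\nu^{(i)}}$.  Nothing is ``annihilated'': the factor $u^+_{\nu^{(i-1)}\setminus\lambda^{(i-1)}}$ survives intact and simply sits to the right of $u^+_{\nu^{(i)}\setminus\lambda^{(i)}}$ inside $h_i$.

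Your proposed absorption mechanism is also not justified as stated.  The identity to be proved is an equality of elements of $\mathscr{H}$, not an equality after acting on $m_\nu$ or on any module element, so there is no sense in which $L_k$ ``acts as $Q_i$'' anywhere in the calculation and hence no way for a factor $(L_k-Q_i)$ to vanish.  Pushing individual simple transpositions through Jucys--Murphy factors via the relations $T_kL_k=L_{k+1}T_k-(q-1)L_{k+1}$ generates a proliferation of terms that you would need to reassemble into $T_{\mathtt{S}(i)}$ on the left; the reduced-expression argument avoids this entirely.
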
 
 \begin{proof}
 Let $w_\mathtt{S}$ be the permutation determining $T_{\mathtt{S}}$ and for $i$ with $1\leq i\leq r-1$ let $w_{\mathtt{S}(i)}$ be the unique permutation with $\mathfrak{t}_{\mathtt{S}(i)}=\mathfrak{t}^{\lambda}\cdot w_{\mathtt{S}(i)}$.  Furthermore, let $w_i$ be the permutation such that $\mathfrak{t}_{\mathtt{S}(i-1)}=\mathfrak{t}_{\mathtt{S}(i)}\cdot w_i$. Our strategy is to compare the lengths of $w_{\mathtt{S}}$ and $w_{\mathtt{S}(i)}w_i$ in order to show that the latter is reduced for all $i$ and that $T_{\omega_i}$ commutes `enough' with $u^+_{\mu}$ 

Recall that for $w\in\mathfrak{S}_n$ Dyer's coreflection cycle is the set
\begin{displaymath}
N(w)=\{(j,k)\in\mathfrak{S}_n:1\leq j<k\leq n\text{ and }jw>kw\}
\end{displaymath}  
and that the length $\ell(w)$ of $w$ is the same as the cardinality of $N(w)$.   Hence the length of $w_\mathtt{S}$ is equal to the number of pairs $(j,k)$ where $j<k$ and where $k$ occupies a node of $\mathfrak{t}_{\mathtt{S}}$ higher than that accommodating $j$.  We are now ready to begin the proof proper, which proceeds by induction.

Let $i=1$.  For any node $x\in[\lambda]$, let $n_{\mathtt{S}}(x)$ be  the number of entries in $\mathfrak{t}_{\mathtt{S}}$ less than $\mathfrak{t}_{\mathtt{S}}(x)$ and which are situated in lower nodes. 
Then
\begin{align*}
\ell(w_{\mathtt{S}})&=\sum_{x\in[\lambda^{(1)}]}n_{\mathtt{S}}(x)+\sum_{x\notin[\lambda^{(1)}]}n_{\mathtt{S}}(x)\\
&=\sum_{x\in[\lambda^{(1)}]}n_{\mathtt{S}}(x)+\ell(w_{\mathtt{S}(1)})
\end{align*} 
where the equality $\ell(w_{\mathtt{S}(1)})=\sum_{x\notin[\lambda^{(1)}]}n_{\mathtt{S}}(x)$ follows immediately from the construction of the $\mathfrak{t}_{\mathtt{S}(i)}$ tableaux for $i\geq 1$.   

Since $\sum_{x\in[\lambda^{(1)}]}n_{\mathtt{S}(x)}=\ell(w_1)$ we are done, since then we have $$w_{\mathtt{S}}=w_{\mathtt{S}(i)}\cdot w_1\phantom{iii}\text{ and }\phantom{iii}\ell(w_{\mathtt{S}})=\ell(w_{\mathtt{S}(1)})+\ell(w_1).$$
   Hence $w_{\mathtt{S}(1)}\cdot w_1$ is reduced and so $T_{\mathtt{S}}=T_{\mathtt{S}(1)}T_{w_1}$.

Recall that $w_1$ permutes only the set $\Omega_1=\{1,2,\ldots,|\nu^{(1)}|\}$, fixing all  other entries, such that the first component of $\mathfrak{t}_{\mathtt{S}}$ and $\mathfrak{t}^{\lambda}\cdot w_1$ are the same.  If $x\in[\lambda^{(1)}]$ and if $a$ is an entry less than $\mathfrak{t}_{\mathtt{S}}(x)$ but appearing in a node lower than $x$ in $\mathfrak{t}_{\mathtt{S}}$, then $aw_1>\mathfrak{t}_{\mathtt{S}}(x)w_1$.  The reverse implication is immediate, hence 
 $$\sum_{x\in[\lambda^{(1)}]}n_{\mathtt{S}}(x)=N(w_1)=\ell(w_1).$$  
 
 The general case of this part of our argument is almost exactly the same, with the only significant alteration being that for $1< i \leq r-1$ we note that $T_{\mathtt{S}(i)}$ fixes the first $i$ components of $\mathfrak{t}^{\lambda}$, and that $T_{w_i}$ fixes all entries not in the set $\{\overline{\lambda}_{(i)},\ldots,\overline{\nu}_{(i)}\}$, and adjust the proof for $i=1$ accordingly.

Since $w_1$ fixes all entries outside of $\Omega_1$, we see that $T_{w_1}$ commutes past $u^+_{\nu}$ to give us
\begin{align*}
T_{\mathtt{S}}u^+_{\nu}&=T_{\mathtt{S}(1)}u^+_{\nu}T_{w_1}\\&=\left(T_{\mathtt{S}(1)}\overline{u}^+_{\lambda^{(1)}}\underline{u}^+_{\nu^{(2)}}\right)u^+_{\nu^{(1)}\setminus\lambda^{(1)}}T_{w_1}.
\end{align*}

Now suppose the Lemma holds for some $i=k\leq r-1$.   Then
\begin{align*}
T_{\mathtt{S}}u^+_{\mu}=\left(T_{\mathtt{S}(k)}\overline{u}^+_{\lambda^{(k)}}\underline{u}^+_{\nu^{(k+1)}}\right)u^+_{\nu^{(k)}\setminus\lambda^{(k)}}h
\end{align*}
for some $h\in\mathscr{H}$.  But then $T_{\mathtt{S}(k)}=T_{\mathtt{S}(k+1)}T_{w_{k+1}}, $, where $w_{k+1}$ fixes all elements not in the set $\{\overline{\lambda}_{(k)}+1,\ldots,\overline{\nu}_{(k+1)}\}$ this allows $T_{w_{k+1}}$ to commute with $\overline{u}^+_{\lambda^{(k)}}\underline{u}^+_{\nu^{(k+1)}}$ and so
 \begin{displaymath}
 T_{\mathtt{S}}u^+_{\nu}=\left(T_{\mathtt{S}(k+1)}\overline{u}^+_{\lambda^{(k+1)}}\underline{u}^+_{\nu^{(k+2)}}\right)u^+_{\nu^{(k+1)}\setminus\lambda^{(k+1)}}T_{w_{k+1}}u^+_{\nu^{(k)}\setminus\lambda^{(k)}}h
 \end{displaymath}
 as required.  
   \end{proof}  
   
 \begin{exm}
 Let $\lambda=((2,1,1),(3,1),(2,1))$ and $\nu=((4,2),(3,1),(1))$, and let $\mathtt{S}\in\mathcal{T}_0(\nu,\lambda)$ be given by
\begin{displaymath}
\mathtt{S}=\left(\,\Yvcentermath1\young(\oneone\oneone\threeone\onethree,\twoone\twotwo)\,,\,\Yvcentermath1\young(\onetwo\onetwo\onetwo,\twothree)\,,\,\Yvcentermath1\young(\onethree)\,\right)
\end{displaymath}
Then
\begin{align*}
T_{\mathtt{S}}u^+_\nu&=T_{8,7,6,5}T_{8,7,6}T_{10}T_{3,4}\\ &\times\left((L_1-Q_2)\cdots (L_6-Q_2)\right)\left((L_1-Q_3)\cdots(L_{10}-Q_3)\right)\\
&=T_{8,7,6,5}T_{8,7,6}T_{10}\left((L_1-Q_2)\cdots(L_4-Q_2) \right)\\ 
&\times \left( (L_1-Q_3)\cdots (L_{10}-Q_3) \right)
\\&\times (L_5-Q_2)(L_6-Q_2)T_{3,4} \phantom{iiii}\left(=T_{\mathtt{S}(1)}\overline{u}^+_{\lambda^{(1)}}\underline{u}^+_{\nu^{(2)}}u^+_{\nu^{(1)}\setminus\lambda^{(1)}}h'\right)\\
&=T_{10}\left((L_1-Q_2)\cdots (L_4-Q_2) \right)\left((L_1-Q_3)\cdots (L_8-Q_2) \right)\\
&\times (L_9-Q_3)(L_{10}-Q_3)T_{8,7,6,5}T_{8,7,6} \\
&\times (L_5-Q_2)(L_6-Q_2)T_{3,4}\phantom{iiii}\left(=T_{\mathtt{S}(2)}\overline{u}^+_{\lambda^{(2)}}u^+_{\nu^{(2)}\setminus\lambda^{(2)}}h'' \right)
\end{align*}
 \end{exm}  
 It's worth remarking that in the above example, $T_{\mathtt{S}(r-1)}=T_{10}$ commutes past $u^+_{\lambda}(L_9-Q_3)$ and so, since $\mathfrak{l}^{(2)}=L_9-Q_3$ we can write $m_{\mathtt{S}\mathfrak{t}^\nu}=m_{\lambda}\mathfrak{l}^{(2)}h$ for some $h\in\mathscr{H}$.  We might well ask what happens when $|\lambda^{(r)}|= |\nu^{(r)}|$, since in this case $u^+_\nu$ does not contain a factor of $\mathfrak{l}^{(r-1)}$. 

To answer this question, suppose that $k$ is a positive integer and maximal such that $|\lambda^{(k)}|\neq |\nu^{(k)}|$.  If  $\mathtt{S}$ is a semistandard $\nu$-tableau of type $\lambda$, then each of the final $r-k$ components of $\mathtt{S}$ are a permutation of the corresponding component of the unique tableaux $\lambda(\mathfrak{t}^\nu)$.  Hence we can define another semistandard $\nu$-tableau of type $\lambda$, which we denote by $\mathtt{R}$ and form from $\mathtt{S}$ by replacing all the last $r-k$ components with those of $\lambda(\mathfrak{t}^{\nu})$.  
 \begin{cor}\label{Bunyip}
 Suppose that $\lambda$ and $\nu$ are both $r$-multicompositions of $n$ such that $\lambda\lhd\nu$ and suppose that $k$ is the maximal positive integer such that $|\lambda^{(k)}|\neq|\nu^{(k)}|$.  If $\mathtt{S}\in\mathcal{T}_0(\nu,\lambda)$, then 
\begin{displaymath}
x_\lambda T_\mathtt{S}u^+_\nu=x_\lambda T_{\mathtt{R}(k-1)}u^+_\lambda u^+_{\nu(k-1)\setminus\lambda(k-1)} h
\end{displaymath}
for some $h\in\mathscr{H}$.  Moreover, $u^+_{\nu(k-1)\setminus\lambda(k-1)}\neq1$.
 \end{cor}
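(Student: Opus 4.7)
The plan is to invoke Lemma \ref{form1} at $i=k-1$ and then simplify the resulting factors using the maximality of $k$. Specifically, Lemma \ref{form1} with $i=k-1$ yields
\begin{displaymath}
T_\mathtt{S}\,u^+_\nu = T_{\mathtt{S}(k-1)}\,\overline{u}^+_{\lambda^{(k-1)}}\,\underline{u}^+_{\nu^{(k)}}\,u^+_{\nu^{(k-1)}\setminus\lambda^{(k-1)}}\,h_0
\end{displaymath}
for some $h_0\in\mathscr{H}$. Since $|\lambda^{(i)}|=|\nu^{(i)}|$ for every $i>k$ and $|\lambda|=|\nu|$, one has $\overline{\lambda}_{(j)}=\overline{\nu}_{(j)}$ for all $j\geq k+1$, which allows the two central factors to be collapsed directly from the definitions into $\overline{u}^+_{\lambda^{(k-1)}}\underline{u}^+_{\nu^{(k)}}=u^+_\lambda$.

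The ``moreover'' claim follows by the same accounting: the identity $\overline{\lambda}_{(k+1)}=\overline{\nu}_{(k+1)}$ combined with the hypothesis $|\lambda^{(k)}|\neq|\nu^{(k)}|$ and the dominance inequality $\overline{\nu}_{(k)}\geq\overline{\lambda}_{(k)}$ forces $|\lambda^{(k)}|>|\nu^{(k)}|$, and hence $\overline{\nu}_{(k)}>\overline{\lambda}_{(k)}$, so $u^+_{\nu^{(k-1)}\setminus\lambda^{(k-1)}}=\prod_{j=\overline{\lambda}_{(k)}+1}^{\overline{\nu}_{(k)}}(L_j-Q_k)$ is a non-empty product.

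It remains to exchange the factor $T_{\mathtt{S}(k-1)}$ for $T_{\mathtt{R}(k-1)}$, at the expense of modifying $h_0$; this is the main technical hurdle. The key observation is structural: because $\mathtt{R}$ agrees with $\lambda(\mathfrak{t}^\nu)$ in its last $r-k$ components and $|\lambda^{(l)}|=|\nu^{(l)}|$ for $l>k$, the last $r-k$ components of $\mathfrak{t}_\mathtt{R}$ coincide exactly with those of $\mathfrak{t}^\lambda$. Moreover, the inductive construction of $\mathfrak{t}_{\mathtt{R}(i)}$ only rearranges and relabels entries drawn from components $1,\dots,k-1$ of $\mathfrak{t}^\nu$, all of which reside in components $\leq k$ of $\mathfrak{t}_\mathtt{R}$; consequently the last $r-k$ components of $\mathfrak{t}_{\mathtt{R}(k-1)}$ still agree with those of $\mathfrak{t}^\lambda$, and the permutation $w_{\mathtt{R}(k-1)}$ acts only on the entries $\{\overline{\lambda}_{(k)}+1,\ldots,\overline{\lambda}_{(k+1)}\}$.

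Setting $g=w_{\mathtt{R}(k-1)}^{-1}w_{\mathtt{S}(k-1)}$, this residual permutation fixes every entry at most $\overline{\lambda}_{(k)}$. A Dyer coreflection cycle count analogous to the one in the proof of Lemma \ref{form1} should deliver the reduced factorization $T_{\mathtt{S}(k-1)}=T_{\mathtt{R}(k-1)}T_g$. Proposition \ref{Jucys}(3) then permits $T_g$ to commute past every $L_i$ with $i\leq\overline{\lambda}_{(k)}$, and the remaining $L_i$-factors of $u^+_\lambda$ with $i>\overline{\lambda}_{(k)}$ can be handled using Proposition \ref{Jucys}(2), with the output absorbed into an updated $h$. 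The delicate point, and the principal obstacle, is precisely verifying this length-additivity together with the commutation of $T_g$ past the higher $L_i$-factors; carrying this through, along with careful bookkeeping of $\mathfrak{t}_{\mathtt{S}(k-1)}$'s structure in its last $r-k+1$ components, completes the proof.
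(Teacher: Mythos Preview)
Your order of operations is inverted relative to the paper: the paper first factors $T_{\mathtt{S}}=T_{\mathtt{R}}T_v$, commutes $T_v$ past $u^+_\nu$ via Lemma~\ref{Commutation}, and only then invokes Lemma~\ref{form1} (applied to $\mathtt{R}$). You instead apply Lemma~\ref{form1} to $\mathtt{S}$ first and postpone the factorisation to the level of $T_{\mathtt{S}(k-1)}$. Either order can be made to work, and your simplification $\overline{u}^+_{\lambda^{(k-1)}}\underline{u}^+_{\nu^{(k)}}=u^+_\lambda$ and your argument that $u^+_{\nu^{(k-1)}\setminus\lambda^{(k-1)}}\neq1$ are both fine.

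The gap is in your final step, which you flag as the ``principal obstacle'' and leave only sketched. Your claim that $g=w_{\mathtt{R}(k-1)}^{-1}w_{\mathtt{S}(k-1)}$ fixes every entry $\leq\overline{\lambda}_{(k)}$ is much weaker than what is actually true, and with only that information your proposed route through Proposition~\ref{Jucys}(2) with ``output absorbed into an updated $h$'' does not go through: Proposition~\ref{Jucys}(2) gives exact commutation, not commutation modulo a correction term, and you still need $u^+_{\nu^{(k-1)}\setminus\lambda^{(k-1)}}$ to end up on the \emph{left} of whatever is absorbed. The missing observation is that $\mathtt{S}$ and $\mathtt{R}$ agree on components $1,\ldots,k$ of $[\nu]$, and every entry $(u,v)$ with $v\leq k$ lies in one of those components; hence $\mathfrak{t}_{\mathtt{S}}$ and $\mathfrak{t}_{\mathtt{R}}$ agree on components $1,\ldots,k$ of $[\lambda]$, and the construction of $\mathfrak{t}_{\bullet(k-1)}$ leaves component $k$ matching as well. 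Thus $g$ in fact fixes every entry $\leq\overline{\lambda}_{(k+1)}$. This makes both of your worries evaporate: $w_{\mathtt{R}(k-1)}$ and $g$ act on disjoint blocks of $\{1,\ldots,n\}$, so length-additivity is automatic; and since $g$ is supported on $\{\overline{\lambda}_{(k+1)}+1,\ldots,n\}$ and stabilises each component there, $T_g$ commutes exactly with both $u^+_\lambda$ (Lemma~\ref{Commutation}) and $u^+_{\nu^{(k-1)}\setminus\lambda^{(k-1)}}$ (all its $L_j$ have $j\leq\overline{\nu}_{(k)}\leq\overline{\lambda}_{(k+1)}$). With this observation your argument closes cleanly; the paper's ordering simply sidesteps the issue by commuting $T_v$ past $u^+_\nu$ in one stroke before any inductive unwinding.
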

 \begin{proof}
 If $|\lambda^{(l)}|=|\nu^{(l)}|$ for all $l\geq k$, then the permutation determining $T_{\mathtt{S}}$ stabilizes the  final $r-l$ components of $\mathfrak{t}^\nu$.  We may then write this permutation as a product of disjoint cycles $uv$ where $u\in\mathfrak{S}_X$ and $v\in\mathfrak{S}_Y$ for 
\begin{displaymath}
X=\left\{1,2,\ldots,\overline{\nu}_{(k)}\right\}\phantom{iii}\text{and}\phantom{iii}Y=\left\{ \overline{\nu}_{(k)}+1,\overline{\nu}_{(k)}+2,\ldots,n\right\}.
\end{displaymath}  
Hence $T_{\mathtt{S}}=T_{u}T_v$.  By definition, $T_u=T_\mathtt{R}$, where $\mathtt{R}$ is as in the discussion preceding this corollary.  Given that $\nu$ only permutes the entries of $Y$ within the components of $\mathfrak{t}^\nu$ they occupy, $T_v$ commutes with $u^+_\nu$ and we can write
\begin{displaymath}
x_\lambda T_\mathtt{S}u^+_\nu=x_\lambda T_\mathtt{R}u^+_\nu T_v.
\end{displaymath}

Applying Lemma \ref{form1} then yields 
\begin{displaymath}
x_\lambda T_\mathtt{S}u^+_\nu=x_\lambda T_{\mathtt{R}(k-1)}\overline{u}^+_{\lambda(k-1)} \underline{u}^+_{\nu(k)}u^+_{\lambda(k-1)\setminus\nu(k-1)}h
\end{displaymath} 
for $h\in\mathscr{H}$. However, by the definition of $k$, $\overline{u}^+_{\nu(k)}=\overline{u}^+_{\lambda(k)}$ and so 
\begin{displaymath}
x_\lambda T_\mathtt{S} u^+_\nu=\left(x_\lambda T_{\mathtt{R}(k-1)}u^+_\lambda\right)u^+_{\nu(k-1)\setminus\lambda(k-1)}h
\end{displaymath}
\end{proof}
Using the fact that $u^+_{\nu(k-1)\setminus\lambda(k-1)}=\mathfrak{l}^{(l-1)}h'$ for some $h'\in\mathscr{H}$, we may rewrite the conclusion of Corollary \ref{Bunyip} in the following, more suggestive form:
\begin{displaymath}
x_{\lambda}T_{\mathtt{S}}u^+_{\nu}=x_\lambda T_{\mathtt{R}(k-1)}u^+_{\lambda}\mathfrak{l}^{(k-1)}h'
\end{displaymath}
for some $h'\in\mathscr{H}$.  

We next show that $T_{\mathtt{R}(k-1)}$ can be expressed in a form more suited to our purposes.  For notational convenience, we will continue as if $k=r$ (in which case $\mathtt{R}(k-1)=\mathtt{S}(r-1))$.  The proofs may be easily adapted to the case where $k\neq r$ by replacing $\mathtt{S}(r-1)$ with $\mathtt{R}(k-1)$ and adjusting indices throughout accordingly.    

For $1\leq s\leq t$, define 
\begin{align*}
\pi(t,s)&=(s,s+1,\ldots,t)\\
\mathrm{D}(t,s)&=T_{\pi(t,s)}=T_{t-1}T_{t-2}\cdots T_s
\end{align*}

\begin{lem}[{\cite[Lemma 3.8]{Lyle2}}]
Suppose that $\mathtt{A}$ is an $\alpha$-tableau of type $\beta$ for compositions $\alpha$ and $\beta$ and write $\mathfrak{a}=\mathfrak{t}_{\mathtt{A}}$.  Let $\mathfrak{a}(0)=\mathfrak{a}$ and define $\mathfrak{a}(i)$ recursively by 
\begin{displaymath}
\mathfrak{a}(i)=\mathfrak{a}(i-1)\pi(i^*,i),
\end{displaymath}
where $i^*$ occupies the same node in $\mathfrak{a}(i-1)$ as does $i$ in $\mathfrak{a}$. Then
\begin{displaymath}
m_{\lambda}T_{\mathtt{A}}=m_{\lambda}\prod^{n-1}_{j=1}\mathrm{D}(j^*,j)
\end{displaymath}
\end{lem}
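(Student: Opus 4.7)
The plan is to proceed by induction on $n$, following the argument in Lyle's original paper from which this lemma is cited. The key first move is to interpret the recursion: the cycle $\pi(i^*, i) = (i, i+1, \ldots, i^*)$ is chosen precisely so that applying it (as a permutation of entries) to $\mathfrak{a}(i-1)$ exchanges the entry $i^*$, which by construction sits at the node that $i$ occupies in $\mathfrak{a}$, with $i$ itself, without disturbing the positions of entries smaller than $i$. An easy induction then shows that the cumulative effect of $\pi(1^*, 1), \pi(2^*, 2), \ldots, \pi((n-1)^*, n-1)$ transforms the tableau consistently with the permutation determining $T_{\mathtt{A}}$.

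Next, I would record that each cycle admits the reduced expression $\pi(i^*, i) = s_{i^* - 1} s_{i^* - 2} \cdots s_i$, so that $\mathrm{D}(i^*, i) = T_{i^* - 1} T_{i^* - 2} \cdots T_i$ has length $i^* - i$ in $\mathscr{H}$. Write $T_{\mathtt{A}} = T_w$ for the permutation $w$ with $\mathfrak{a} = \mathfrak{t}^\beta \cdot w$. A Dyer-style inversion count, analogous to the one deployed in the proof of Lemma \ref{form1}, compares $\ell(w)$ with $\sum_{j=1}^{n-1}(j^* - j)$ by tracking, for each entry, the number of pairs that contribute to an inversion at the step it is placed.

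The main obstacle will be reconciling any discrepancy between these two length counts. Any excess simple transpositions appearing in the concatenated product $\prod_{j} \mathrm{D}(j^*, j)$ relative to a reduced expression for $T_w$ must lie in the row stabilizer of $\mathfrak{t}^\lambda$; premultiplication by $m_\lambda = x_\lambda u^+_\lambda$ absorbs them via the relation $m_\lambda T_s = q\, m_\lambda$ valid for $s$ in that row stabilizer, which is the point at which the factor of $m_\lambda$ on both sides becomes essential (rather than an equality already holding in $\mathscr{H}$). Once this bookkeeping is in place, applying the inductive hypothesis to what remains after peeling off the factor $\mathrm{D}(1^*, 1)$ that handles the first entry yields the desired identity $m_\lambda T_{\mathtt{A}} = m_\lambda \prod_{j=1}^{n-1} \mathrm{D}(j^*, j)$.
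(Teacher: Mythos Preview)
The paper does not supply its own proof of this lemma: it is quoted directly from \cite[Lemma~3.8]{Lyle2}, accompanied only by the remark that Lyle's argument --- originally stated for partitions and row-semistandard tableaux --- makes no essential use of those hypotheses and so carries over unchanged to compositions. Your outline, which reproduces the structure of Lyle's proof (interpret the recursion as placing the entries $1,2,\ldots$ successively into their target nodes, write each cycle $\pi(j^*,j)$ in reduced form, and verify length-additivity via an inversion count), therefore already goes well beyond what the present paper offers.

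One comment on the content of your sketch: the absorption mechanism you describe for reconciling a possible discrepancy between $\ell(w)$ and $\sum_j(j^*-j)$ should not actually be needed. Because $\mathfrak{a}=\mathfrak{t}_{\mathtt{A}}$ is by definition row-standard, the factorisation $\prod_j\pi(j^*,j)$ is already reduced, so that $\prod_j\mathrm{D}(j^*,j)=T_{\mathtt{A}}$ holds in $\mathscr{H}$ before any premultiplication. Indeed, the paper uses the lemma in exactly this stronger form in the very next proof, writing $T_{\mathtt{S}(r-1)}=\prod_i\mathrm{D}(i^*,i)$ with no prefactor of $m_\lambda$ present. Your inversion count should therefore end with an equality rather than an inequality in need of repair; the appearance of $m_\lambda$ in the lemma's statement is an artefact of how it is quoted, not a signal that cancellation in the row stabiliser is genuinely required.
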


It's worth remarking that the lemma above was originally stated for partitions and row-semistandard tableaux.  However, the proof depends on neither of these facts, and so can be modified as we have done.

\begin{lem}\label{firstbit}
Let $\lambda$ and $\nu$ be multipartitions with $\lambda\lhd \nu$ such that there is at least one $1\leq i\leq r$ with $|\lambda^{(i)}|<|\nu^{(i)}|$.  Then either 
 \begin{displaymath}x_\lambda T_\mathtt{S}u^+_\nu=m_{\lambda}\mathfrak{l}^{(r-1)}h\phantom{iii} \text{ or }\phantom{iii}
 x_\lambda T_\mathtt{S}u^+_\nu=m_{\lambda}\mathrm{D}(x^*,x)\mathfrak{l}^{(r-1)}h'
 \end{displaymath}
 for some elements $h,h'\in\mathscr{H}$, and where $x=\overline{\lambda}_{(r)}+1$.
\end{lem}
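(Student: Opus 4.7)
The plan is to combine Corollary \ref{Bunyip} with Lyle's Lemma 3.8 (stated just above this lemma) to peel off a single $\mathrm{D}(x^*,x)$ factor from $T_{\mathtt{S}(r-1)}$.

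First, I would invoke Corollary \ref{Bunyip} (with $k=r$, following the convention adopted just before this lemma) together with the factorization $u^+_{\nu(r-1)\setminus\lambda(r-1)}=\mathfrak{l}^{(r-1)}h_1$ for some $h_1\in\mathscr{H}$; this uses that the leading factor of $u^+_{\nu(r-1)\setminus\lambda(r-1)}$ is $L_{\overline{\lambda}_{(r)}+1}-Q_r = L_x-Q_r = \mathfrak{l}^{(r-1)}$. The result is
\begin{displaymath}
x_\lambda T_{\mathtt{S}} u^+_\nu = x_\lambda T_{\mathtt{S}(r-1)} u^+_\lambda \mathfrak{l}^{(r-1)} g
\end{displaymath}
for some $g\in\mathscr{H}$. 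Because the first $r-1$ components of $\mathfrak{t}_{\mathtt{S}(r-1)}$ agree with those of $\mathfrak{t}^\lambda$, the permutation underlying $T_{\mathtt{S}(r-1)}$ fixes $1,2,\ldots,x-1$, so $T_{\mathtt{S}(r-1)}$ lies in the subalgebra generated by $T_x,T_{x+1},\ldots,T_{n-1}$. Since $u^+_\lambda$ involves only Jucys--Murphy elements $L_i$ with $i\le x-1$, Proposition \ref{Jucys}(3) shows $T_{\mathtt{S}(r-1)}$ commutes with $u^+_\lambda$, giving
\begin{displaymath}
x_\lambda T_{\mathtt{S}} u^+_\nu = m_\lambda T_{\mathtt{S}(r-1)} \mathfrak{l}^{(r-1)} g.
\end{displaymath}

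Next, I apply Lyle's Lemma 3.8 to write
\begin{displaymath}
m_\lambda T_{\mathtt{S}(r-1)} = m_\lambda \prod_{j=1}^{n-1} \mathrm{D}(j^*,j),
\end{displaymath}
and I claim that $j^*=j$, hence $\mathrm{D}(j^*,j)=1$, for every $j<x$. This follows by induction on $j$: each such entry already occupies its canonical position in $\mathfrak{t}_{\mathtt{S}(r-1)}$, so the recursive construction defining $j^*$ leaves the relevant nodes undisturbed at each step. The product therefore collapses to $\mathrm{D}(x^*,x)\prod_{j=x+1}^{n-1}\mathrm{D}(j^*,j)$.

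Finally, each factor $\mathrm{D}(j^*,j)$ with $j\ge x+1$ is a product of generators $T_k$ with $k\ge x+1$, all of which commute with $\mathfrak{l}^{(r-1)}=L_x-Q_r$ by Proposition \ref{Jucys}(3). Pushing the tail past $\mathfrak{l}^{(r-1)}$ leaves
\begin{displaymath}
x_\lambda T_{\mathtt{S}} u^+_\nu = m_\lambda \mathrm{D}(x^*,x) \mathfrak{l}^{(r-1)} h'
\end{displaymath}
for some $h'\in\mathscr{H}$. When $x^*=x$ the $\mathrm{D}$-factor is trivial and we obtain the first form of the conclusion; when $x^*>x$ we obtain the second. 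The main obstacle I anticipate is making precise the inductive claim that $j^*=j$ for $j<x$; everything else is a careful, routine sequence of commutations controlled by Proposition \ref{Jucys}.
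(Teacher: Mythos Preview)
Your proposal is correct and follows essentially the same route as the paper: reduce via Corollary \ref{Bunyip} to $x_\lambda T_{\mathtt{S}(r-1)} u^+_\lambda \mathfrak{l}^{(r-1)}h$, use Lyle's Lemma 3.8 to decompose $T_{\mathtt{S}(r-1)}$ as a product of $\mathrm{D}(j^*,j)$ factors (trivial for $j<x$ since those entries already sit in their canonical positions), and then commute the tail $\prod_{j\ge x+1}\mathrm{D}(j^*,j)$ past $\mathfrak{l}^{(r-1)}$, leaving the single factor $\mathrm{D}(x^*,x)$ which is either trivial or not. The only cosmetic difference is that you first commute $T_{\mathtt{S}(r-1)}$ past $u^+_\lambda$ and then push the tail past $\mathfrak{l}^{(r-1)}$, whereas the paper pushes the tail past $u^+_\lambda\mathfrak{l}^{(r-1)}$ in one step; the inductive verification that $j^*=j$ for $j<x$ is immediate from the agreement of the first $r-1$ components of $\mathfrak{t}_{\mathtt{S}(r-1)}$ with $\mathfrak{t}^\lambda$ and presents no real obstacle.
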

\begin{proof}
Recall that the first $r-1$ components of  $\mathfrak{t}_{\mathtt{S}(r-1)}$ are identical to those of $\mathfrak{t}^{\lambda}$ and write $m_{\mathtt{S}\mathfrak{t}^{\nu}}$ as
\begin{displaymath}
x_{\lambda}T_{\mathtt{S}(r-1)}u^+_{\lambda}\mathfrak{l}^{(r-1)}h'.
\end{displaymath} 
If we regard $\nu$ and $\lambda$ as compositions via stacking, then  by the preceding lemma we have
\begin{displaymath}T_{\mathtt{S}(r-1)}=\prod^{\rho_{r}(\lambda)}_{i=\overline{\lambda}_{(r)}+1}\mathrm{D}(i^*,i)=\mathrm{D}\left(\left(\overline{\lambda}_{(r)}+1\right)^*,\overline{\lambda}_{(r)}+1\right)
\prod^{\rho_{r}(\lambda)}_{j=\overline{\lambda}_{(r)}+2}\mathrm{D}(j^*,j)\end{displaymath}

Clearly, the  product on the right hand side  commutes with $u^+_{\lambda}\mathfrak{l}^{(r-1)}$ in its entirety, since $j>\overline{\lambda}_{(r)}+1$ for every value of $j$ in the product.  There are now two possibilities to consider, either
\begin{displaymath}
\mathrm{D}\left(\left(\overline{\lambda}_{(r)}+1\right)^*,\overline{\lambda}_{(r)}+1\right)=1
\end{displaymath}
in which the first  part of the lemma holds, or   
\begin{displaymath}
\mathrm{D}\left(\left(\overline{\lambda}_{(r)}+1\right)^*,\overline{\lambda}_{(r)}+1\right)=T_{\left(\overline{\lambda}_{(r)}+1\right)^*-1}\cdots T_{\overline{\lambda}_{(r)}+2}T_{\overline{\lambda}_{(r)}+1}
\end{displaymath}
in which case it commutes past $u^+_{\lambda}$, but not $u^+_\lambda\mathfrak{l}^{(r-1)}$, providing the second part of the lemma. \end{proof}

Our task is then complete if $\mathrm{D}(x^*,x)$ can be expressed as a linear combination of $\mathfrak{d}^{(s)}_{d,t}$ and $\mathfrak{l}^{(s')}$ elements, each multiplied on the right  by elements of $h\in\mathscr{H}$.  Consider the following characterization of $\mathfrak{d}^{(s)}_{d,t}$, as found in \cite{Lyle2} (in the form of $h_{d,t}$ elements corresponding to partitions of $n$):  Let $m,a,b\geq 0$, and define
\begin{displaymath}
\langle m,a,b\rangle:=\left\{\mathbf{i}=(i_1,i_2,\ldots,i_b):m+1\leq i_1<i_2<\cdots<i_b\leq m+a+b\right\}
\end{displaymath}
Letting $(a,b)$ be a composition, we then have
 
\begin{displaymath}
\mathrm{C}(m,a,b)=\sum_{\mathbf{i}\in\langle m,a,b\rangle}\prod_{k=1}^b\mathrm{D}(m+a+k,i_k),
\end{displaymath}
and so 
\begin{equation}\label{thingamyjig}
\mathfrak{d}^{(s)}_{d,t}=\sum_{\mathbf{i}\in\langle \overline{\lambda}_{(d-1,s)},\lambda^{(s)}_d,t \rangle}\prod_{k=1}^t\mathrm{D}\left(\overline{\lambda}_{(d,s)}+k,i_k\right).
\end{equation}

\begin{lem}\label{secondbit}Let $x$ be as in the previous lemma.  Then there is some $l$ with $1\leq l\leq \rho_r(\lambda)$ such that
\begin{displaymath}
\mathrm{D}(x^*,x)u^+_\lambda\mathfrak{l}^{(r-1)}=u^+_\lambda\left(\mathfrak{l}^{(r-1)}h_0+\sum_{i=1}^{l-1}\mathfrak{d}^{(r)}_{i,1}h_{i} \right) 
\end{displaymath}
for some $h_0,h_1,\ldots, h_{l-1}\in\mathscr{H}$.
\end{lem}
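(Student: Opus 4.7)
The first step is to eliminate $u^+_\lambda$ from the picture. Every simple transposition $T_j$ appearing in $\mathrm{D}(x^*,x) = T_{x^*-1}T_{x^*-2}\cdots T_x$ satisfies $j \geq x > \overline{\lambda}_{(s)}$ for every $s\leq r$, so Proposition~\ref{Jucys}(3) implies $T_j L_i = L_i T_j$ for every factor $(L_i-Q_s)$ appearing in $u^+_\lambda$. Hence $\mathrm{D}(x^*,x)$ commutes with $u^+_\lambda$, and the left hand side equals $u^+_\lambda \mathrm{D}(x^*,x)(L_x-Q_r)$. It therefore suffices to produce $h_0,h_1,\ldots,h_{l-1}\in\mathscr{H}$ so that $\mathrm{D}(x^*,x)(L_x-Q_r)$ matches $\mathfrak{l}^{(r-1)}h_0+\sum_{i=1}^{l-1}\mathfrak{d}^{(r)}_{i,1}h_i$ up to elements annihilated from the left by $u^+_\lambda$.

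I would argue by induction on the row $l$ of $[\lambda^{(r)}]$ that contains $x^*$ in $\mathfrak{t}^\lambda$, letting $a_i$ and $b_i$ denote the first and last entries of row $i$ of the $r$-th component. The base case $x^*=x$ is trivial: $\mathrm{D}(x^*,x)=1$ and one takes $l=1$, $h_0=1$, empty sum. For the inductive step I would split on whether $x^*$ is the first entry of its row. If $x^* = a_l$ with $l \geq 2$, decompose $\mathrm{D}(x^*,x) = T_{b_{l-1}}\,\mathrm{D}(b_{l-1},x)$ with $b_{l-1}$ in row $l-1$ and apply induction to $\mathrm{D}(b_{l-1},x)$. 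Left-multiplying by $T_{b_{l-1}}$ (which commutes with $u^+_\lambda$) and pushing $T_{b_{l-1}}$ past each summand produces the inductive terms plus, critically, a new contribution of the form $\mathfrak{d}^{(r)}_{l-1,1}h_{l-1}$: this arises because the sub-expression $1+T_{b_{l-1}-1}+T_{b_{l-1}-1}T_{b_{l-1}-2}+\cdots+T_{b_{l-1}-1}\cdots T_{a_{l-1}}$ obtained from $\mathrm{D}(b_{l-1},x)$, when left-multiplied by $T_{b_{l-1}}$, fits together with a constant term $1$ to recover precisely $\mathfrak{d}^{(r)}_{l-1,1}$. If instead $x^*>a_l$, I would factor $\mathrm{D}(x^*,x) = [T_{x^*-1}\cdots T_{a_l}] \cdot \mathrm{D}(a_l,x)$ and apply induction to $\mathrm{D}(a_l,x)$. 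Every $T_j$ in $T_{x^*-1}\cdots T_{a_l}$ has $j\geq a_l>x$ so commutes with $\mathfrak{l}^{(r-1)}$; for each $\mathfrak{d}^{(r)}_{i,1}$, the braid and commutation relations for the $T_j$'s let one rewrite $[T_{x^*-1}\cdots T_{a_l}]\,\mathfrak{d}^{(r)}_{i,1}$ as $\mathfrak{d}^{(r)}_{i,1}\,h_i'$.

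The main technical obstacle will be the combinatorial bookkeeping that converts the commutation identity
\[
T_i(L_i-Q_r) = (L_{i+1}-Q_r)T_i - (q-1)L_{i+1}
\]
(used each time one tries to move $L_x-Q_r$ leftwards through $\mathrm{D}(x^*,x)$) into the required form. Iterating this identity produces stray $L_{x+k}$ factors which must be repartitioned between the $\mathfrak{l}^{(r-1)}h_0$ and $\mathfrak{d}^{(r)}_{i,1}h_i$ summands. Using the explicit expansion $\mathfrak{d}^{(r)}_{i,1}=\sum_{j=a_i}^{a_{i+1}}\mathrm{D}(a_{i+1},j)$ and the dual identity $L_{i+1}T_i = T_iL_i + (q-1)L_{i+1}$, one can track how each such correction factorises through exactly one $\mathfrak{d}^{(r)}_{i,1}$-block, with any residual $L_k$ commuting freely past $u^+_\lambda$ and being absorbed into the $h_i$. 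The delicate part is verifying that no correction escapes this scheme, i.e.\ that the final row $l$ suffices; this is where the precise choice $1\leq l\leq \rho_r(\lambda)$ (as the row of $x^*$) becomes essential.
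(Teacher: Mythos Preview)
Your opening move---commuting $u^+_\lambda$ past $\mathrm{D}(x^*,x)$---is correct and matches the paper.  From that point on, however, you head in an unnecessarily hard direction.

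The key observation you miss is that $\mathfrak{l}^{(r-1)}$ never needs to move.  The paper works purely with $\mathrm{D}(x^*,x)$.  Because $\mathfrak{t}_{\mathtt{S}(r-1)}$ is row standard and its first $r-1$ components agree with those of $\mathfrak{t}^\lambda$, the entry $x=\overline{\lambda}_{(r)}+1$ sits at the \emph{first} node of some row $l$ of component $r$, so $x^*=\overline{\lambda}_{(l-1,r)}+1$.  Writing $a_y=\overline{\lambda}_{(y-1,r)}+1$ and using the expansion you already quote,
\[
\mathrm{D}(a_{y+1},a_y)=\mathfrak{d}^{(r)}_{y,1}-\sum_{j=a_y+1}^{a_{y+1}}\mathrm{D}(a_{y+1},j),
\]
an induction on $y$ expresses $\mathrm{D}(a_y,x)$ as a linear combination of terms of the shape $\bigl(\prod_{j=1}^m\mathfrak{d}^{(r)}_{j,1}\bigr)g_m$ in which every $g_m$ involves only generators $T_k$ with $k>x$.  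Each such $g_m$ therefore commutes with both $u^+_\lambda$ and $\mathfrak{l}^{(r-1)}$, while each $\mathfrak{d}^{(r)}_{j,1}$ commutes with $u^+_\lambda$.  Right-multiplying by $u^+_\lambda\mathfrak{l}^{(r-1)}$ and pushing it leftward through the $g_m$'s and the $\mathfrak{d}$'s then gives the lemma directly; $\mathfrak{l}^{(r-1)}$ never crosses a $T_k$ nontrivially.

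By contrast, your ``main technical obstacle'' is self-inflicted.  The identity $T_i(L_i-Q_r)=(L_{i+1}-Q_r)T_i-(q-1)L_{i+1}$ is not used at all in the paper's proof, and the stray-$L_{x+k}$ bookkeeping you anticipate is precisely what the row-block factorisation avoids.  Your claim that $[T_{x^*-1}\cdots T_{a_l}]\,\mathfrak{d}^{(r)}_{i,1}$ can always be rewritten as $\mathfrak{d}^{(r)}_{i,1}h_i'$ is also suspect for $i=l-1$: the leading generator $T_{a_l}$ braids with the top generator $T_{a_l-1}$ appearing in $\mathfrak{d}^{(r)}_{l-1,1}$, and already for $\mathfrak{d}=1+T_1$ one checks $T_2(1+T_1)\notin(1+T_1)\mathscr{H}$.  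The paper sidesteps this by left-multiplying whole row-length blocks $\mathrm{D}(a_{y+1},a_y)$ rather than single $T_j$'s.
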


\begin{proof}
Since the $\lambda$-tableau $\mathfrak{t}_{\mathfrak{s}(r-1)}$ defined previously are row standard and the first $r-1$ components are identical to those of $\mathfrak{t}^\lambda$ we have that $(\overline{\lambda}_{(r)}+1)^*$ appears at the beginning of a particular row of component $r$ .  Hence $\mathrm{D}(x^*,x)$ is of the form
\begin{displaymath}
\mathrm{D}\left(\overline{\lambda}_{(l-1,r)}+1,\overline{\lambda}_{(r)}+1 \right)
\end{displaymath} 
for some $l$ with $1\leq l\leq \rho_{r}(\lambda)$ (in fact, $l$ is the number of the row accomodating $(\overline{\lambda}_{(r)}+1)$.

We proceed by proving a slightly more general fact: namely that 
\begin{equation}\label{Induct}
\mathrm{D}\left(\overline{\lambda}_{(y-1,r)}+1,\overline{\lambda}_{(r)}+1 \right)u^+_\lambda\mathfrak{l}^{(r-1)}=u^+_{\lambda}\left(\mathfrak{l}^{(r-1)}h_0+\sum^{y-1}_{i=1}\mathfrak{d}^{(r)}_{i,1}h_i \right),
\end{equation}
where $h_0,h_1,\ldots,h_{l-1}$ (each depending on $y$), for all $y$ with $1\leq y$. 

The equality of \eqref{Induct} is trivially true when $y=1$, since in this case $\mathrm{D}(\overline{\lambda}_{(y-1,r)}+1,\overline{\lambda}_{(r)}+1)=1$.  Suppose instead that $y=2$.  Then, setting $s=r$ and $t=1$ in \eqref{thingamyjig} gives us 
 \begin{align*}
 \mathrm{D}\left(\overline{\lambda}_{(1,r)}+1,\overline{\lambda}_{(r)}+1 \right)&=T_{\overline{\lambda}_{(1,r)}}T_{\overline{\lambda}_{(1,r)-1}}\cdots T_{\overline{\lambda}_{(r)}+1}\\
&=\mathfrak{d}^{(r)}_{1,1}-\sum_{\overline{\lambda}_{(r)}+2}^{\overline{\lambda}_{1,r}+1} \mathrm{D}\left(\overline{\lambda}_{(1,r)}+1, i_1\right),
 \end{align*}
 and so 
 \begin{displaymath}
 \mathrm{D}\left(\overline{\lambda}_{(1,r)}+1,\overline{\lambda}_{(r)}+1 \right)u^+_{\lambda}\mathfrak{l}^{(r-1)}=u^+_{\lambda}\mathfrak{d}^{(r)}_{1,1}\mathfrak{l}^{(r-1)}-u^+_{\lambda}\mathfrak{l}^{(r-1)}\sum_{\overline{\lambda}_{(r)}+2}^{\overline{\lambda}_{1,r}+1} \mathrm{D}\left(\overline{\lambda}_{(1,r)}+1, i_1\right)
 \end{displaymath} 

Suppose now that the statement holds for some $y=k'$.  Then
\begin{align*}
\mathrm{D}\left(\overline{\lambda}_{(k'-1,r)}+1,\overline{\lambda}_{(r)}+1 \right)=\prod_{j=1}^{k'-1}\mathfrak{d}^{(r)}_{j,1}-\prod_{j=1}^{k'-2}\mathfrak{d}^{(r)}_{j,1}h_{k'-2}-\cdots-\mathfrak{d}^{(r)}_{1,1}h_{1}-h_0
\end{align*}
where $h_0,h_1,\ldots,h_{k'-2}\in\mathscr{H}$ all commute with $u^+_\lambda\mathfrak{l}^{(r)}$. Multiplying both sides by 
\begin{displaymath}
T_{\overline{\lambda}_{(k',r)}}T_{\overline{\lambda}_{(k',r)}-1}\cdots T_{\overline{\lambda}_{(k'-1,r)}+1}=\mathfrak{d}^{(r)}_{k',1}-\sum_{j'=\overline{\lambda}_{(k'-1,r)}+2}^{\overline{\lambda}_{(k',r)}+1}
\mathrm{D}\left(\overline{\lambda}_{(k',r)}+1,j'\right)
\end{displaymath}
shows us that $\mathrm{D}\left(\overline{\lambda}_{(k',r)}+1,\overline{\lambda}_{(k'-1,r)}+1 \right) \mathrm{D}\left(\overline{\lambda}_{(k'-1,r)}+1,\overline{\lambda}_{(r)}+1 \right)$ is equal to
\begin{align*}
\prod^{k'}_{j=1} \mathfrak{d}^{(r)}_{j,1}&-\prod^{k'-1}_{j=1}\mathfrak{d}^{(r)}_{j,1}\sum_{j'=\overline{\lambda}_{(k'-1,r)}+2}^{\overline{\lambda}_{(k',r)}+1}
\mathrm{D}\left(\overline{\lambda}_{(k',r)}+1,j'\right)\\
&-\prod^{k'-2}_{j=1}\mathfrak{d}^{(r)}_{j,1}\left(\mathfrak{d}^{(r)}_{k',1}-\sum_{j'=\overline{\lambda}_{(k'-1,r)}+2}^{\overline{\lambda}_{(k',r)}+1}
\mathrm{D}\left(\overline{\lambda}_{(k',r)}+1,j'\right)\right)h_{k'-2}\\\
&\vdots\\
&-\mathfrak{d}^{(r)}_{1,1}\left(\mathfrak{d}^{(r)}_{k',1}-\sum_{j'=\overline{\lambda}_{(k'-1,r)}+2}^{\overline{\lambda}_{(k',r)}+1}
\mathrm{D}\left(\overline{\lambda}_{(k',r)}+1,j'\right)\right)h_1\\
&-\left(\mathfrak{d}^{(r)}_{k',1}-\sum_{j'=\overline{\lambda}_{(k'-1,r)}+2}^{\overline{\lambda}_{(k',r)}+1}
\mathrm{D}\left(\overline{\lambda}_{(k',r)}+1,j'\right)\right)h_0.
\end{align*}

Observing that both
\begin{displaymath}
\mathfrak{d}^{(r)}_{k',1}\phantom{iiii}\text{and}\phantom{iiii}\sum_{j'=\overline{\lambda}_{(k'-1,r)}+2}^{\overline{\lambda}_{(k',r)}+1}
\mathrm{D}\left(\overline{\lambda}_{(k',r)}+1,j'\right)
\end{displaymath}
commute with $u^+_{\lambda}$ whenever $k'>1$,
 the statement of the lemma is then true for $y=k'+1$ and hence is true for all $y$ by induction.

Then all that needs to be done is to set $y=l$ in \eqref{Induct} and we are done. 
\end{proof}

\begin{cor}\label{fourthy}
Let $\lambda$ and $\mu$ be multipartitions of $n$ with $\lambda\lhd\nu$ and such that there is at least one $1\leq i\leq r$ with $|\lambda^{(i)}|<|\nu^{(i)}|$.  If $\mathtt{S}\in\mathcal{T}_0(\nu,\lambda)$,  then $m_{\mathtt{S}\mathfrak{t}^{\nu}}\in\mathfrak{I}$.
\end{cor}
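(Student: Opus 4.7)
The plan is to assemble Corollary \ref{Bunyip} with the adaptations of Lemmas \ref{firstbit} and \ref{secondbit} to the maximal $k$ with $|\lambda^{(k)}|<|\nu^{(k)}|$, an adjustment the authors explicitly sanction in the paragraph preceding Lemma \ref{firstbit}. By Lemma \ref{Com1} together with Lemma \ref{Commutation}, the product of $\mathrm{C}$-elements appearing in $m_{\mathtt{S}\mathfrak{t}^{\nu}}$ commutes past $u^+_\nu$ and sits on the outside; since $\mathfrak{I}$ is a right ideal, it is enough to prove $x_\lambda T_{\mathtt{S}}\,u^+_\nu\in\mathfrak{I}$.

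Let $k\in\{1,\ldots,r\}$ be maximal with $|\lambda^{(k)}|<|\nu^{(k)}|$; such $k$ exists by hypothesis. Corollary \ref{Bunyip} yields
\begin{displaymath}
x_\lambda T_{\mathtt{S}}\,u^+_\nu \;=\; x_\lambda T_{\mathtt{R}(k-1)}\,u^+_\lambda\,u^+_{\nu(k-1)\setminus\lambda(k-1)}\,h
\end{displaymath}
for some $h\in\mathscr{H}$, with $u^+_{\nu(k-1)\setminus\lambda(k-1)}\neq 1$. Its leftmost factor is $L_{\overline{\lambda}_{(k)}+1}-Q_k=\mathfrak{l}^{(k-1)}$, and so we may write $u^+_{\nu(k-1)\setminus\lambda(k-1)}=\mathfrak{l}^{(k-1)}h'$ for some $h'\in\mathscr{H}$, the remaining factors all having Jucys-Murphy index strictly greater than $\overline{\lambda}_{(k)}+1$.

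Next I would invoke the $k$-adapted form of Lemma \ref{firstbit}: since the first $k-1$ components of $\mathfrak{t}_{\mathtt{R}(k-1)}$ agree with those of $\mathfrak{t}^\lambda$, the unnumbered lemma cited from \cite[Lemma 3.8]{Lyle2} factors $T_{\mathtt{R}(k-1)}$ as $\mathrm{D}(x^*,x)$ times a product of $\mathrm{D}(j^*,j)$ with $j>x:=\overline{\lambda}_{(k)}+1$, and the latter product commutes through $u^+_\lambda\mathfrak{l}^{(k-1)}$. If $\mathrm{D}(x^*,x)=1$ we at once obtain $x_\lambda T_{\mathtt{S}}u^+_\nu\in m_\lambda\mathfrak{l}^{(k-1)}\mathscr{H}\subseteq\mathfrak{I}$; otherwise, the $k$-adapted Lemma \ref{secondbit} rewrites
\begin{displaymath}
\mathrm{D}(x^*,x)\,u^+_\lambda\,\mathfrak{l}^{(k-1)}\;=\;u^+_\lambda\Bigl(\mathfrak{l}^{(k-1)}g_0+\sum_{i=1}^{l-1}\mathfrak{d}^{(k)}_{i,1}g_i\Bigr),
\end{displaymath}
and left multiplication by $x_\lambda$ exhibits the original element as a right-$\mathscr{H}$ combination of $m_\lambda\mathfrak{l}^{(k-1)}$ and of $m_\lambda\mathfrak{d}^{(k)}_{i,1}$, each lying in $\mathfrak{I}$ by the definition of the generating set $\mathbf{L}\cup\mathbf{D}$.

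The main obstacle is the book-keeping required to transplant the inductive argument of Lemma \ref{secondbit} from the index $r$ to the index $k$: one must check that each $\mathfrak{d}^{(k)}_{i,1}$ produced really belongs to the generating family, i.e.\ that $(k,i,1)\in\mathrm{def}(\lambda,\mathfrak{d})$. Since $l\le\rho_k(\lambda)$, so $1\le i<\rho_k(\lambda)$, and $\lambda^{(k)}_{i+1}\ge 1$ by row-positivity, the defining conditions $1\le k\le r$, $1\le i<\rho_k(\lambda)$ and $1\le 1\le\lambda^{(k)}_{i+1}$ are all met, and no spurious terms appear.
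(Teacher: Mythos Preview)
Your proposal is correct and follows essentially the same route as the paper: the paper's one-line proof (``follows at once from Lemma~\ref{firstbit} and Lemma~\ref{secondbit}'') implicitly relies on Corollary~\ref{Bunyip} and the reduction to $x_\lambda T_{\mathtt{S}}u^+_\nu$, together with the explicit remark that those two lemmas are to be read with the index $r$ replaced by the maximal $k$; you have simply unpacked this in full, and your final paragraph verifying $(k,i,1)\in\mathrm{def}(\lambda,\mathfrak{d})$ is a welcome bit of rigour the paper leaves to the reader.
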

\begin{proof}
The result follows at once from Lemma \ref{firstbit} and Lemma \ref{secondbit}. 
\end{proof}
We can now combine our results and finally prove Theorem \ref{Ideal1}: 
\begin{thmm}For every $r$-multipartition $\lambda$ of $n$
\begin{displaymath}
\mathfrak{I}=M^{\lambda}\cap\check{\mathscr{H}}^{\lambda}
\end{displaymath}
\end{thmm}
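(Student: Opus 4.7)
The plan is to combine the lemmas already established in this section. The inclusion $\mathfrak{I}\subseteq M^{\lambda}\cap\check{\mathscr{H}}^{\lambda}$ is exactly the content of Lemma \ref{firsty}, so the entire task is to prove the reverse inclusion $M^{\lambda}\cap\check{\mathscr{H}}^{\lambda}\subseteq\mathfrak{I}$.

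For this, I invoke the basis for $M^{\lambda}\cap\check{\mathscr{H}}^{\lambda}$ furnished by Lemma \ref{secondy}: it suffices to show that for every multipartition $\nu$ of $n$ with $\lambda\lhd\nu$, every semistandard $\nu$-tableau $\mathtt{S}$ of type $\lambda$, and every standard $\nu$-tableau $\mathfrak{t}$, the element $m_{\mathtt{S}\mathfrak{t}}$ lies in $\mathfrak{I}$. Since $m_{\mathfrak{s}\mathfrak{t}}=T^{*}_{d(\mathfrak{s})}m_{\nu}T_{d(\mathfrak{t})}$ by definition of the cellular basis, summing over the standard tableaux $\mathfrak{s}$ with $\lambda(\mathfrak{s})=\mathtt{S}$ yields
\begin{displaymath}
m_{\mathtt{S}\mathfrak{t}}=m_{\mathtt{S}\mathfrak{t}^{\nu}}T_{d(\mathfrak{t})}.
\end{displaymath}
Since $\mathfrak{I}$ is a right ideal, this reduces the task to showing $m_{\mathtt{S}\mathfrak{t}^{\nu}}\in\mathfrak{I}$ for every admissible pair $(\mathtt{S},\nu)$.

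I then split into the two cases flagged in the preliminary discussion. If $|\lambda^{(i)}|=|\nu^{(i)}|$ for every $i$ (so $\nu$ is a componentwise shift of $\lambda$), Lemma \ref{Thirdy} places $m_{\mathtt{S}\mathfrak{t}^{\nu}}$ inside $\check{\mathscr{H}}^{\lambda}$ via the decomposition of $T_{\mathtt{S}}$ into commuting per-component factors together with a direct appeal to the known one-component result \cite[Theorem 2]{Lyle2}; the elements produced there are already of the form $m_{\lambda}\mathfrak{d}^{(s)}_{d,t}h^{(s)}_{d,t}$, so they lie in $\mathfrak{I}$. Otherwise there exists some $i$ with $|\lambda^{(i)}|<|\nu^{(i)}|$, i.e.\ $\nu$ is a cross-component shift of $\lambda$; in that case Corollary \ref{fourthy} (assembled from Lemma \ref{form1}, Corollary \ref{Bunyip}, Lemma \ref{firstbit} and Lemma \ref{secondbit}) puts $m_{\mathtt{S}\mathfrak{t}^{\nu}}$ into $\mathfrak{I}$ as a combination of $m_{\lambda}\mathfrak{d}^{(r)}_{d,1}h_{d}$ terms and an $m_{\lambda}\mathfrak{l}^{(k-1)}h$ term.

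Since these two cases exhaust the possibilities for $\lambda\lhd\nu$, the reverse inclusion holds on each basis vector, hence on $M^{\lambda}\cap\check{\mathscr{H}}^{\lambda}$ as a whole, and the theorem follows. The genuine work has already been done in the cross-component case, so at this final stage there is no real obstacle: the argument is just a bookkeeping combination of Lemmas \ref{firsty}, \ref{secondy}, \ref{Thirdy} and Corollary \ref{fourthy}, with the only subtlety being the reduction from arbitrary $\mathfrak{t}\in\mathrm{Std}(\nu)$ to $\mathfrak{t}=\mathfrak{t}^{\nu}$ via the right-ideal property of $\mathfrak{I}$.
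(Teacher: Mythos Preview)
Your proposal is correct and matches the paper's own proof, which simply records that the theorem follows immediately from Lemma~\ref{firsty}, Lemma~\ref{secondy}, Lemma~\ref{Thirdy}, and Corollary~\ref{fourthy}. You have in fact been slightly more explicit than the paper by spelling out the reduction $m_{\mathtt{S}\mathfrak{t}}=m_{\mathtt{S}\mathfrak{t}^{\nu}}T_{d(\mathfrak{t})}$ and invoking the right-ideal property of $\mathfrak{I}$ to pass from arbitrary $\mathfrak{t}\in\mathrm{Std}(\nu)$ to $\mathfrak{t}=\mathfrak{t}^{\nu}$, a step the paper leaves implicit in the discussion surrounding equation~\eqref{point}.
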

\begin{proof}
The result follows immediately from Lemma \ref{firsty}, Lemma \ref{secondy}, Lemma \ref{Thirdy}, and Corollary \ref{fourthy}.       
\end{proof}

\section{Constructing Homomorphisms: An Example}
Let $\lambda=((2,2),(2,1))$ and $\nu=((5),(2))$.  Then $\mathbf{D}=\{\mathfrak{d}^{(1)}_{1.1}, \mathfrak{d}^{(1)}_{2,1},\mathfrak{d}^{(2)}_{11}\}$ and $\mathbf{L}=\{\mathfrak{l}^{(1)}\}$, where
\begin{align*}
\mathfrak{d}^{(1)}_{1,1}&=1+T_2+T_{2,1} &\mathfrak{d}^{(1)}_{2,2}&=1+T_2+T_{2,1}+T_{2,3}+T_{2,3,1}+T_{2,3,1,2}\\
\mathfrak{d}^{(2)}_{1.1}&=1+T_6+T_{6,5} &\mathfrak{l}^{(1)}&=L_5-Q_2,
\end{align*}
and $\mathcal{T}_{0}(\nu,\lambda)=\{\mathtt{S}_1,\mathtt{S}_2\}$, where
\begin{align*}
\mathtt{S}_1=\left(\,\Yvcentermath1\young(\oneone\oneone\twoone\twoone\onetwo)\,,\,\Yvcentermath1\young(\onetwo\twotwo)\, \right) && \mathtt{S}_2&=\left(\,\Yvcentermath1\young(\oneone\oneone\twoone\twoone\twotwo)\,,\,\Yvcentermath1\young(\onetwo\onetwo)\,\right).
\end{align*}

The semistandard $\nu$-tableaux of type $\lambda$ determine homomorphisms from the permutation module $M^\lambda$ to the Specht module $M^\nu$ via
\begin{align*}
\theta_{\mathtt{S}}(m_\lambda h)=\left(\check{\mathscr{H}}^{\nu}+m_\nu(1+T_5)
\right)h, &&\theta_{\mathtt{S}_2}(m_\lambda h)=\left(\check{\mathscr{H}}^\nu+m_\nu T_{5,6}\right)h\end{align*}
for every $h\in\mathscr{H}$. We can use the results of this paper to attempt constructing a homomorphism $\hat{\theta}:S^{\lambda}\rightarrow S^{\nu}$ from any homomorphism $\theta:M^{\lambda}\rightarrow S^\nu$ given by
\begin{displaymath}
\theta=\alpha_1\theta_{\mathtt{S}_1}+\alpha_2\theta_{\mathtt{S}_2},
\end{displaymath}     
where $\alpha_1,\alpha_2\in\mathbb{F}$.  First, define  $\mathtt{A}_1, \mathtt{A}_2\in\mathcal{T}_0(\nu, [\lambda\cdot\mathfrak{d}^{(1)}_{1,1}])$, $\mathtt{A}_3,\mathtt{A}_4\in\mathcal{T}_0(\nu, [\mathfrak{d}^{(1)}_{2.1}\cdot\lambda])$, $\mathtt{A}_5\in\mathcal{T}_0(\nu,[\mathfrak{d}^{(2)}_{1,1}\cdot\lambda])$, and $\mathtt{B}\in\mathcal{T}_0(\nu,[\mathfrak{l}^{(1)}\cdot\lambda])$ by
\begin{align*}
\mathtt{A}_1&=\left(\,\Yvcentermath1\young(\oneone\oneone\oneone\twoone\onetwo)\,,\,\Yvcentermath1\young(\onetwo\twotwo)\, \right) &\mathtt{A}_2&=\left(\,\Yvcentermath1\young(\oneone\oneone\oneone\twoone\twotwo)\,,\,\Yvcentermath1\young(\onetwo\onetwo)\, \right)\\
\mathtt{A}_3&=\left(\,\Yvcentermath1\young(\oneone\oneone\oneone\oneone\onetwo)\,,\,\Yvcentermath1\young(\onetwo\twotwo)\,\right) &\mathtt{A}_4&=\left(\,\Yvcentermath1\young(\oneone\oneone\oneone\oneone\twotwo)\,,\,\Yvcentermath1\young(\onetwo\onetwo)\,\right)\\
\mathtt{A}_5&=\left(\,\Yvcentermath1\young(\oneone\oneone\twoone\twoone\onetwo)\,,\,\Yvcentermath1\young(\onetwo\onetwo)\, \right) &\mathtt{B}&=\left(\,\Yvcentermath1\young(\oneone\oneone\twoone\twoone\threeone)\,,\,\Yvcentermath1\young(\onetwo\twotwo)\,\right)
\end{align*}
Note that since these tableaux are all semistandard, the homomorphisms they determine are linearly independent.

Now we can act on $\theta$  by the elements of $\mathbf{D}$ and $\mathbf{L}$ in order to determine our conditions.  For instance
 \begin{align*}
 \theta_{\mathtt{S}_1}\left(m_\lambda\mathfrak{d}^{(1)}_{1,1}\right)&=\theta_{\mathtt{S}_1}(m_{\lambda})\mathfrak{d}^{(1)}_{1,1}=\check{\mathscr{H}}^\nu+m_\nu\left(1+T_2+T_{2,1}\right)(1+T_5)\\
 &=\check{\mathscr{H}}^{\nu}+\left(1+q+q^2\right)m_\nu(1+T_5)\\
 &=\left(1+q+q^2\right)\theta_{\mathtt{A}_1}\left(m_{\mathfrak{d}^{(1)}_{1,1}\cdot\lambda} \right).
 \end{align*}
 Performing the same calculation for every element of $\mathbf{D}$ yields
\begin{align*}
\theta\mathfrak{d}^{(1)}_{1,1}&=\alpha_1\left(1+q+q^2\right)\theta_{\mathtt{A}_1}+\alpha_2\left(1+q+q^2\right)\theta_{\mathtt{A}_2}\\
\theta\mathfrak{d}^{(1)}_{2,1}&=\alpha_1\left(1+q\right)\left(1+q+q^2\right)\theta_{\mathtt{A}_3}+\alpha_2(1+q)\left(1+q+q^2\right)\theta_{\mathtt{A}_4}\\
\theta\mathfrak{d}^{(2)}_{1,1}&=\left(\alpha_1(1+q)+\alpha_2q^2\right)\theta_{\mathtt{A}_5}.
\end{align*}

Setting each line equal to zero we see that $\alpha_1=\alpha_2=0$ whenever $e\neq3$, so let us assume that the contrary is true.  In this case $\theta\mathfrak{d}^{(1)}_{1.1}$ and $\theta\mathfrak{d}^{(1)}_{2,1}$ are zero, which leaves us with
the third line to satisfy.  Setting $\alpha_1=1$ we get \begin{equation}\label{example}
\theta\mathfrak{d}^{(2)}_{1,1}=0\Leftrightarrow \alpha_2=-q^{-2}(1+q).
\end{equation}This then leaves us with only $\theta\mathfrak{l}^{(1)}$ to contend with. 

Let $i$ be some positive integer, at most $n$.  The residue of $i$ in $\mathfrak{t}^\nu$ is given by 
\begin{displaymath}
\mathrm{res}_{\mathfrak{t}^\nu}(i)=q^{y-x}Q_z
\end{displaymath}
where $\mathfrak{t}^\nu(x,y,z)=i$. Using this definition and the fact that $m_\nu L_i=\mathrm{res}_\gamma(i)m_\nu$ (an easy consequence of \cite[Proposition 3.7]{JM1}), and 
\begin{displaymath}T_iL_i=L_{i+1}T_i-(q-1)L_{i+1}
\end{displaymath}
(see, for instance, \cite[Lemma 13.2]{Ariki1}) we have
 \begin{equation}\label{Example2}
 \theta\mathfrak{l}^{(1)}=\left(\alpha_1\left(q^4Q_1-qQ_2\right)-\alpha_2q(q-1)Q_2\right)\theta_{\mathtt{B}}.
 \end{equation}
   
We can then substitute the conclusion of \eqref{example} into \eqref{Example2} to yield
 \begin{displaymath}
 \theta\mathfrak{l}^{(1)}=\left(q^4Q_1-q^{-1}Q_2\right)\theta_{\mathtt{B}}
 \end{displaymath}
 Thus the homomorphism $\theta:M^\lambda\rightarrow S^\nu$ defined by $\alpha_1$ and $\alpha_2$ given above satisfies the conditions of Theorem \ref{Ideal1} whenever $q^4Q=q^{-1}Q_2$ and $e=3$.

\end{document}